\documentclass{amsart}
\setlength{\textwidth}{6.0in}
\setlength{\textheight}{8,5in}
\setlength{\oddsidemargin}{0in}
\setlength{\evensidemargin}{0in}
\title[Non-invariant deformations of left-invariant complex structures]
{Non-invariant deformations of left-invariant complex structures on compact Lie groups
}

\author{Hiroaki Ishida}
\address{Department of Mathematics and Computer Science, Graduate School of Science and Engineering, Kagoshima University}
\email{ishida@sci.kagoshima-u.ac.jp}
\author{Hisashi Kasuya}
\address{Department of Mathematics, Graduate School of Science, Osaka University, Osaka, Japan.}
\email{kasuya@math.sci.osaka-u.ac.jp}

\keywords{compact Lie group, (non)-invariant complex structure,  deformation, cohomology of holomorphic vector bundle}
\subjclass[2010]{Primary:22E46, 32M05, 32M10, 32G05, Secondary:58A30,  32L10}
\thanks{The first author is supported by JSPS KAKENHI Grant Number JP20K03592. The second author is supported by JSPS KAKENHI Grant Number JP19H01787, JP21K03248}

\date{\today}

\usepackage{amssymb}
\usepackage{amsmath}
\usepackage{amscd}
\usepackage{amstext}
\usepackage{amsfonts}
\usepackage{color}
\usepackage[all]{xy}

%

%
%
%

\newtheorem{prop}{Proposition}[section]
\newtheorem*{prop*}{Proposition \referenza}
\newtheorem{thm}[prop]{Theorem}
\newtheorem*{thm*}{Theorem \referenza}
\newtheorem{cor}[prop]{Corollary}
\newtheorem*{cor*}{Corollary \referenza}

\theoremstyle{definition}
\newtheorem{defi}[prop]{Definition}
\newtheorem{rem}[prop]{Remark}
\newtheorem{ex}[prop]{Example}

\newcommand{\C}{\mathbb{C}}

\newcommand{\N}{\mathbb{N}}
\newcommand{\Q}{\mathbb{Q}}

\newcommand{\g}{\frak{g}}
\newcommand{\m}{\frak{m}}
\newcommand{\gl}{\frak{gl}}

\newcommand{\h}{\frak{h}}
\newcommand{\fk}{\mathfrak{k}}
\newcommand{\fb}{\frak{b}}
\newcommand{\ft}{\frak{t}}
\newcommand{\fl}{\frak{l}}
\newcommand{\fu}{\frak{u}}

\newcommand{\Id}{\operatorname{Id}}

\newcommand{\relmiddle}[1]{\mathrel{}\middle#1\mathrel{}}

\begin{document} 

\maketitle
\begin{abstract}
We give small deformations of a left-invariant complex structure on each simply connected  semisimple compact Lie group of even dimension which are not biholomorphic to any left-invariant (right-invariant) complex structure by using the Kuranishi space.
On such deformed complex manifolds,  we prove the Borel-Weil-Bott type theorem  and  we compute the cohomology of holomorphic tangent bundles.
\end{abstract}

\section{Introduction}
In \cite{B} (see also \cite[Section 4.8]{Ak}), combining the generalization of the Borel-Weil theorem and Kodaira-Spencer theory, Bott proves that the complex structure of every flag manifold is locally rigid.
Therefore, small deformations of the flag manifolds are uninteresting.
In contrast to the flag case, small deformations  of general compact complex homogeneous  manifolds may be non-trivial and  very interesting.
For examples, compact complex parallelizable manifolds (complex manifolds with trivial holomorphic tangent bundles)  are  complex homogeneous by Wang's result \cite{Wangp} (see  also \cite[Section 3.4]{Ak}) and 
many  complex parallelizable manifolds admit non-trivial deformations and there are various results on the geometry of deformations of them (see e.g. \cite{Ghy, Nak, AK,  Has, Rol,Win}).
Unlike complex parallelizable manifolds,  little is known about deformations of simply connected non-K\"ahler  compact complex homogeneous  manifolds.
But, we can expect they   are also  very varied and  interesting by the following observation.
Calabi-Eckmann manifolds are  simply connected non-K\"ahler  compact complex homogeneous  manifolds which are homeomorphic to the
products of two odd-dimensional spheres (\cite{CE}).
In \cite{Aka},
Akao proves that  the complete family of  deformations of a Calabi-Eckmann manifold is parametrized by matrices and he also announces that complex geometric  invariants of  deformations of a Calabi-Eckmann manifold vary depending on such parameters.

The main objects of this paper are simply connected compact Lie groups.
It is known that every  compact Lie group $K$ of even dimension admits left-invariant (or right-invariant) complex structures (\cite{Sam})  and  such complex manifold  is a complex homogeneous  manifold (\cite{Wang}).
If a compact Lie group $K$  is simply connected, then obviously it is   non-K\"ahler.
The purpose of this paper is to study the geometry of deformations of  left-invariant  complex structures on simply connected  semisimple compact Lie groups.
More precisely, we compute cohomologies of deformations of  left-invariant  complex structures which  are not left-invariant.

Let $K$ be a simply connected  semisimple compact Lie group of even dimension.
Assume that the rank is $2l$.
Let $J$ be a  left-invariant complex structure on $K$.
Then, by describing the  Kuranishi  space of   $J$, we can construct deformations $J_{\epsilon}$ of $J$ which are a priori  non-invariant.
Associated with such $J_{\epsilon}$ sufficiently near to $J$, we prove the Borel-Weil-Bott type theorem (Theorem \ref{BWBD}) and by using this we compute the cohomology of the holomorphic tangent bundle if    certain generic condition holds (Theorem \ref{HT}).
As a result, the dimension of holomorphic vector fields corresponding to $J_{\epsilon}$ is smaller than the one corresponding to any left-invariant (or right-invariant) complex structure and hence  the complex manifold $(K, J_{\epsilon})$ is not biholomorphic to $K$ with  any left-invariant or right-invariant complex structure. (Corollary \ref{nobil}).

We notice that  in \cite[Section 4]{LMN} Loeb, Manjarin and  Nicolau constructed non-invariant complex structures on a general semisimple compact Lie group $K$ of even dimension associated with  subgroups of the direct product $H\times H$ of copies of a Cartan subgroup $H$ of the complexification of $K$.
The constructions in this paper  are different from their constructions.
Our constructions are more suitable for representation theory of semi-simple Lie algebras.

We explain our computation more precisely by Lie algebras.
Let $\fk$ be the Lie algebra of $K$ of rank $2l$ and $\g=\fk_{\C}=\fk\otimes \C$.
We can associate  our non-invariant deformation $J_{\epsilon}$ with an $l$-tuple $(X_{1},\dots, X_{l})\in \g^{l}$ satisfying $[X_{i},X_{j}]=0$ for each $i,j\in \{1,\dots, l\}$.
We will prove that if an $l$-tuple $(X_{1},\dots X_{l})$ satisfies certain conditions related  to roots of a semisimple Lie algebra $\g$, then the dimension of the linear space of holomorphic vector fields associated with $J_{\epsilon}$ is equal to
\[l+\dim  \g^{\langle X_{1},\dots, X_{l}\rangle}
\]
where $\g^{\langle X_{1},\dots, X_{l}\rangle}$ is the subspace in $\g$ consisting of elements commuting with   $X_{1},\dots, X_{l}$.
This is not equal to the one associated with any left-invariant (right-invariant) complex structure.
We note that under certain geometric  conditions, we can determine  the Lie algebra structure on the space of holomorphic vector fields associated with $J_{\epsilon}$.

The case $G=SU(2)\times SU(2)$ is  a typical example. 
In this example, our results can be seen as  a Lie theoretic aspect of deformation theory  of Calabi-Eckmann  type complex structures on $S^{3}\times S^{3}$ derived from classical dynamical system as in \cite{LN}. 

\bigskip

\noindent {\bf Acknowledgement.} We are grateful to the anonymous referee for the invaluable comments and useful suggestions on improving the text.

\section{Notations and conventions}

Let $K$ be a simply connected  semisimple compact Lie group and $\fk$ the Lie algebra of $K$.
Take a maximal torus $T\subset K$ with the Lie sub-algebra $\ft$.
Take the decomposition 
$\fk=\ft\oplus \m$ associated with the Cartan-Killing bilinear form.
Let $\g=\fk_{\C}$ and $\h=\ft_{\C}$.
Then $\frak h$ is a Cartan sub-algebra of $\g$.
Denote by $\Delta$ (resp. $\Delta^{+}$) the set of roots (resp. positive roots) associated with $\h$.
Consider the subspaces $\fu=\bigoplus_{\alpha\in \Delta^{+}}\g_{\alpha}$ and $\fb=\h\oplus \bar{\fu}$ of $\g$ where $\g_{\alpha}$ is the root space of a root $\alpha$.
Then $\fb$ is a Borel sub-algebra of $\g$ and $\bar{\fu}=[\fb,\fb]$.
We have the decomposition $\g={\bar \fu}\oplus \h\oplus \fu$ and $\m=\{X+{\bar X}\mid X\in \fu\}$.
Since $K$ is simply connected, we can take a unique simply  connected complex Lie group $G$ containing $K$ as a real form.
Consider the Lie  subgroups $B$,  $U$ and  $U^{-}$ corresponding to $\fb$,   $\fu$ and  $\bar\fu$ respectively.

\section{Deformations of left-invariant complex structures}
We regard $\fk$, $\g$ etc. as the spaces of left-invariant vector fields.
Assume $\dim K$ is even. In particular, the rank $2l$ of $K$ is even. 
Let $\frak l\subset  \h$ be a subspace such that $\h=\fl\oplus \bar\fl$. 
Set $\g_{\fl}=\fl\oplus \fu$.
Then $\g_{\fl}$ is a  sub-algebra of $\g$ and $\g=\g_{\fl}\oplus \bar \g_{\fl}$ and hence we obtain a left-invariant complex structure $J_{\fl}$ on $K$.
It is known that every left-invariant complex structure can be described in this manner by taking an appropriate $\ft$.
Consider the decomposition $TK_{\C}=T^{1,0}_{J_{\fl}}K\oplus T^{0,1}_{J_{\fl}}K$ of the tangent bundle associated with the complex structure 
$J_{\fl}$.

Let $\fk^{R}$ be the space of right invariant vector fields on $K$ and $\g^{R}=\fk_{\C}^{R}$.
Denote by $\g^{R}_{{hol}}$  the image of the projection  $p^{1,0}_{J_{\fl}}:TK_{\C} \to T^{1,0}_{J_{\fl}}K$ on $\g^{R}$.
It is easily shown that $[\g^{R}, \g]=0$.
This implies that $\bar\partial \g^{R}_{hol}=0$ where  $\bar\partial$ is the Dolbeault operator on $T^{1,0}K$  and hence $\g^{R}_{hol}$ consists of holomorphic vector fields.
By $[\fl, \bar\g_{\fl}]\subset \bar\fu$, we have $\bar\partial \fl=0$.
Thus we have the spaces $\g^{R}_{hol}$ and $ \fl$ consisting  holomorphic vector fields.

Since $K$ is compact,  we obtain a holomorphic action of $G$ on $K$ so that  infinitesimal transformations of the action are holomorphic vector fields in   $\g_{hol}^{R}$ (see \cite[Section 1.8]{Ak}).
By the definition of $\g_{hol}^{R}$, for this action, the subgroup $K\subset G$ acts as the left-multiplication.
Thus this action is transitive and hence the complex manifold $K$ is identified with a quotient space $G/G_{e}$ such that $G_{e}$ is the stabilizer of the identity element $e$ of $K$.
Since $G$ and $K$ are simply connected, $G_{e}$ is the subgroup of $G$ corresponding to the subspace $\{X\in \g_{hol}^{R}\mid X_{e}=0\}$.
This space corresponds to the subspace  $ \bar\g_{\fl}\subset \g$ by the standard identification $\g^{R}\cong T_{e}K_{\C}\cong\g$.
Thus, $G_{e}$ is the group $\exp (\bar\fl)\cdot U^{-}$ where $U^{-}$ is the subgroup of $G$ corresponding to the sub-algebra $\bar \fu$. 
Since  the action of $G$ on $K= G/\exp (\bar\fl)\cdot U^{-}$ is effective, $\g_{hol}^{R}$ can be identified with the Lie algebra of $G$ and can be the space of  fundamental vector fields associated with the holomorphic action of $G$ on $G/\exp (\bar\fl)\cdot U^{-}$.
We notice that we have isomorphisms $\g\cong \g_{R}\cong \g^{R}_{hol}$. 
We can regard $  \fl$ as the space of left-invariant holomorphic vector fields on $G/\exp (\bar\fl)\cdot U^{-}$.
Since $K$ is semisimple, we have $ \g^{R}_{hol}\cap \fl=\{0\}$.

The space $\g^{R}_{hol}\oplus  \fl$ is in fact the space of all holomorphic vector fields on $K$ with the   left-invariant complex structure $J_{\fl}$ by the following proposition.
\begin{prop}\label{cohlI}
Denote by $(A^{0,*}(K, T^{1,0}_{J_{\fl}}K), \bar\partial)$  the differential graded Lie algebra of the differential forms of type $(0,\ast)$ with values in the holomorphic vector bundle $T^{1,0}_{J_{\fl}}K$ with the Dolbeault operator $ \bar\partial$.
Consider the graded Lie algebra $\bigwedge \bar\fl^{*} \otimes (\g^{R}_{hol}\oplus \fl)$ as a sub-algebra of $A^{0,*}(K, T^{1,0}_{J_{\fl}}K)$.
Then the inclusion  $\bigwedge \bar\fl^{*} \otimes (\g^{R}_{hol}\oplus \fl)\to A^{0,*}(K, T^{1,0}_{J_{\fl}}K)$ induces an isomorphism
\[\bigwedge \bar\fl^{*} \otimes (\g^{R}_{hol}\oplus \fl)\cong H^{*}(K, T^{1,0}_{J_{\fl}}K).
\]

\end{prop}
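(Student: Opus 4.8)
The plan is to use the presentation of $M:=(K,J_{\fl})$ as the complex homogeneous space $G/G_{e}$ with $G_{e}=\exp(\bar\fl)\cdot U^{-}$ obtained above, together with the holomorphic, $G$-equivariant fibration $\pi\co M=G/G_{e}\to F:=G/B$ induced by the inclusion $G_{e}\subset B$ onto the flag manifold $F$; its fibre is the complex torus $\Theta:=B/G_{e}\cong H/\exp(\bar\fl)$. First I would analyse the relative tangent sequence
\[
0\to T_{M/F}\to T^{1,0}_{J_{\fl}}K\xrightarrow{d\pi}\pi^{*}T^{1,0}F\to 0 .
\]
Since $G_{e}$ acts trivially on $\fb/\bar\g_{\fl}\cong\h/\bar\fl\cong\fl$, the vertical bundle $T_{M/F}$ is the trivial holomorphic bundle $\mathcal O_{M}\otimes\fl$. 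For the last term I would compute the higher direct images $R^{q}\pi_{*}\mathcal O_{M}$: as $G$-equivariant bundles on $F=G/B$ they correspond to the $B$-module $H^{q}(\Theta,\mathcal O_{\Theta})\cong\bigwedge^{q}\bar\fl^{*}$, and since $B$ acts on $\Theta=B/G_{e}$ through translations of the torus $\Theta$ (the normal subgroup $U^{-}$ acting trivially, so $B$ acts through $B/U^{-}\cong H$), this $B$-module is trivial. Hence $R^{q}\pi_{*}\mathcal O_{M}\cong\mathcal O_{F}\otimes\bigwedge^{q}\bar\fl^{*}$, and by the projection formula $R^{q}\pi_{*}(\pi^{*}T^{1,0}F)\cong T^{1,0}F\otimes\bigwedge^{q}\bar\fl^{*}$.

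Feeding this into the two Leray spectral sequences and using the classical facts $H^{p}(F,\mathcal O_{F})=0=H^{p}(F,T^{1,0}F)$ for $p>0$ and $H^{0}(F,T^{1,0}F)\cong\g$, both spectral sequences are concentrated in the column $p=0$ and give, for all $n$,
\[
H^{n}(M,T_{M/F})\cong\textstyle\bigwedge^{n}\bar\fl^{*}\otimes\fl,\qquad H^{n}(M,\pi^{*}T^{1,0}F)\cong\textstyle\bigwedge^{n}\bar\fl^{*}\otimes\g .
\]
The long exact cohomology sequence of the relative tangent sequence then becomes
\[
\cdots\to\textstyle\bigwedge^{n}\bar\fl^{*}\otimes\fl\to H^{n}(M,T^{1,0}_{J_{\fl}}K)\to\textstyle\bigwedge^{n}\bar\fl^{*}\otimes\g\xrightarrow{\delta_{n}}\textstyle\bigwedge^{n+1}\bar\fl^{*}\otimes\fl\to\cdots .
\]

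The decisive step, and the one I expect to be the main obstacle, is the vanishing of every connecting homomorphism $\delta_{n}$; this is not formal, because the extension class of the relative tangent sequence (equivalently the principal $\Theta$-bundle $M\to F$) is in general non-zero. I would deduce it from the fact that the whole long exact sequence is $\g$-equivariant for the $G$-action: in $H^{n}(M,\pi^{*}T^{1,0}F)\cong\bigwedge^{n}\bar\fl^{*}\otimes\g$ the factor $\g=H^{0}(F,T^{1,0}F)$ carries the adjoint representation, while the factor $\bigwedge^{n}\bar\fl^{*}$ (inherited from the trivial equivariant bundle $R^{n}\pi_{*}\mathcal O_{M}$) and the whole of $H^{n+1}(M,T_{M/F})\cong\bigwedge^{n+1}\bar\fl^{*}\otimes\fl$ are trivial $\g$-modules. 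As $\g$ is semisimple, $\operatorname{Hom}_{\g}(\g,\C)=0$, so every $\g$-equivariant map $\bigwedge^{n}\bar\fl^{*}\otimes\g\to\bigwedge^{n+1}\bar\fl^{*}\otimes\fl$ vanishes; in particular $\delta_{n}=0$. (For $n=0$ one also sees this directly, since $\g^{R}_{hol}\subset H^{0}(M,T^{1,0}_{J_{\fl}}K)$ already maps onto $H^{0}(M,\pi^{*}T^{1,0}F)$.) Hence the long exact sequence splits into the short exact sequences $0\to\bigwedge^{n}\bar\fl^{*}\otimes\fl\to H^{n}(M,T^{1,0}_{J_{\fl}}K)\to\bigwedge^{n}\bar\fl^{*}\otimes\g\to 0$.

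It remains to identify this with the asserted sub-complex. One first checks that $\bigwedge\bar\fl^{*}\otimes(\g^{R}_{hol}\oplus\fl)$ carries the zero differential inside $A^{0,*}(K,T^{1,0}_{J_{\fl}}K)$: the left-invariant forms in $\bigwedge\bar\fl^{*}$ are $\bar\partial$-closed because $[\bar\g_{\fl},\bar\g_{\fl}]\subset\bar\fu$, and $\g^{R}_{hol}$ and $\fl$ consist of holomorphic vector fields as recalled above. The induced map $\bigwedge\bar\fl^{*}\otimes(\g^{R}_{hol}\oplus\fl)\to H^{*}(M,T^{1,0}_{J_{\fl}}K)$ is then a morphism of short exact sequences: the sub-summand $\bigwedge\bar\fl^{*}\otimes\fl$ lands in the subspace $\bigwedge\bar\fl^{*}\otimes\fl=H^{*}(M,T_{M/F})$ and is an isomorphism onto it (the forms in $\bigwedge\bar\fl^{*}$ restrict on each fibre to a basis of $H^{*}(\Theta,\mathcal O_{\Theta})$, and $\fl$ gives the vertical holomorphic fields), while on the quotients it induces $\bigwedge\bar\fl^{*}\otimes\g^{R}_{hol}\xrightarrow{\ \sim\ }\bigwedge\bar\fl^{*}\otimes\g$ via the isomorphism $\g^{R}_{hol}\cong\g$. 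By the five lemma the middle map is an isomorphism, which is the claim.
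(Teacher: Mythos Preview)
Your proof is correct, and it reaches the same conclusion by a genuinely different route than the paper. The paper applies the Borel spectral sequence for the holomorphic fibration $K\to K/T=G/B$ directly to the whole bundle $T^{1,0}_{J_{\fl}}K$: with the filtration by $\bar\fu$-degree one gets $E_2^{p,q}=H^{0,q}(T)\otimes H^{p}(G/B,E_{\g_{\fl}})$, and since the $B$-module $\g_{\fl}$ sits in $0\to\fl\to\g_{\fl}\to\fu\to 0$ with $\fl$ trivial and $E_{\fu}=T^{1,0}(G/B)$, Bott's vanishing gives $H^{p}(G/B,E_{\g_{\fl}})=0$ for $p\ge 1$ and $\g^{R}_{hol}\oplus\fl$ for $p=0$; the comparison of $E_2$-terms with the subcomplex then finishes the argument. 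You instead split $T^{1,0}_{J_{\fl}}K$ via the relative tangent sequence, run Leray on the two pieces separately, and then need the extra step of killing the connecting maps $\delta_{n}$, which you do by the nice observation that $\g$-equivariance and semisimplicity force any map $(\text{adjoint})\to(\text{trivial})$ to vanish. The paper's approach is more economical precisely because working with the full bundle inside one spectral sequence absorbs the extension problem (on $G/B$ the corresponding connecting maps vanish for the trivial reason that one side is always zero), whereas your approach trades that for a clean representation-theoretic argument that makes the role of semisimplicity more visible.
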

\begin{proof}
Define the decreasing filtration $F^{*}$ on $\bigwedge \bar\g^{\ast}_{\fl}$ by
\[F^{r}\left(\bigwedge \bar\g^{\ast}_{\fl}\right)=\bigoplus_{p\ge r} \bigwedge \bar\fl^{*}\otimes\bigwedge^{p}\bar\fu^{*}.
\]
By  the identification
\[A^{0,*}(K, T_{J_\fl}^{1,0}K)=\bigwedge \bar\g^{\ast}_{\fl}\otimes C^{\infty}(K)\otimes  \g_{\fl},
\]
we define the  decreasing filtration $F^{*}$ on $A^{0,*}(K, T^{1,0}_{J_{\fl}}K)$ by extending the one on  $\bigwedge \bar\g^{\ast}_{\fl}$.
Consider the spectral sequence $E_{*}^{*,*}(A^{0,*}(K, T^{1,0}_{J_{\fl}}K))$ associated with this filtration.
Then this filtration is actually the Borel spectral sequence (\cite[Appendix II]{Hir}) for the holomorphic principal fibration $K\to K/T$ where $K/T$ is regarded as a flag variety $G/B$ and $T$ is equipped with   a complex structure induced by $\ft_{\C}=\fl\oplus \bar\fl$.
Thus we have 
\[E_{2}^{p,q}(A^{0,*}(K, T^{1,0}_{J_{\fl}}K))=H^{0,q}(T)\otimes  H^{p}(G/B, E_{\g_{\fl}})
\]
where $E_{\g_{\fl}}$ is the homogeneous holomorphic vector bundle associated with the representation $ B\to GL(\g_{\fl})$ induced by the adjoint representation where we identify  $ \g_{\fl}= {\g_/\overline{ \g}_{\fl}}$.
Notice that $\fl\subset \g_{\fl}$ is a trivial $B$-submodule  and the  homogeneous holomorphic vector bundle corresponding to  the $B$-module $\fu = \g_{\fl}/\fl$ is the holomorphic tangent bundle $T^{1,0}G/B$.
We know  $H^{0,p}(G/B)=0$ for $p\ge 1$.
By Bott's result (\cite[(14.2)]{B}), we have $H^{p}(G/B, T^{1,0}G/B)=0$ for $p\ge 1$ and $H^{0}(G/B, T^{1,0}G/B)\cong \g^{R}_{hol}$ (see \cite[Section 4.8. Theorem 2]{Ak}).
We have 
\[H^{\ast}(G/B, E_{\g_{\fl}})=\g^{R}_{hol}\oplus  \fl.
\]

Consider the spectral sequence $E_{*}^{*,*}(\bigwedge \bar\fl^{*} \otimes (\g^{R}_{hol}\oplus \fl))$ of the restriction of the   filtration $F^{*}$ on $A^{0,*}(M, T^{1,0}_{J_{\fl}}K)$.
Obviously we have $E_{2}^{0,q}(\bigwedge \bar\fl^{*} \otimes (\g^{R}_{hol}\oplus \fl))=\bigwedge \bar\fl^{q} \otimes (\g^{R}_{hol}\oplus \fl)$ and $E_{2}^{p,q}(\bigwedge \bar\fl^{*} \otimes (\g^{R}_{hol}\oplus \fl))=0$ for $p\not=0$.
It is sufficient to show that the map $\iota: E_{2}^{*,*}(\bigwedge \bar\fl^{*} \otimes (\g^{R}_{hol}\oplus \fl))\to  E_{2}^{*,*}(A^{0,*}(K, T^{1,0}_{J_{\fl}}K))$ on $E_{2}$-term induced by the inclusion $\bigwedge \bar\fl^{*} \otimes (\g^{R}_{hol}\oplus \fl)\to A^{0,*}(K, T^{1,0}_{J_{\fl}}K)$ is an isomorphism. 
We can easily check that $H^{0,q}(T)\cong \bigwedge^{q} \bar\fl^{*}$.
Hence this follows from the above arguments.

\end{proof}

Let $ {\mathcal K}_{\fl}=\{ \epsilon\in \bar\fl^{*} \otimes (\g^{R}_{hol}\oplus \fl)\mid [ \epsilon, \epsilon]=0\}$.
For sufficiently small $ \epsilon\in {\mathcal K}_{\fl}$, the complex  sub-bundle $(Id+ \bar \epsilon)TK^{1,0}_{J_{\fl}}\subset TK_{\C}$ defines an integral almost complex structure $J_{\fl+\epsilon}$ on $K$.
By the above proposition,  $ \bar\fl^{*} \otimes (\g^{R}_{hol}\oplus \fl)\cong H^{1}(M, T^{1,0}_{J_{\fl}}K)$ and the cohomology class of 
the bracket  $[ \epsilon, \epsilon]$ is trivial if and only if $[ \epsilon, \epsilon]=0$.
Thus by the Kodaira-Spencer theory,  $ {\mathcal K}_{\fl}$  gives a complete deformation of $J_{\fl}$ (see \cite[Section 6]{Huy}).
In particular  every small deformation of $J_{\fl}$ is isomorphic to $J_{\fl+\epsilon}$ for some small $ \epsilon\in {\mathcal K}_{\fl}$.

Take a basis $\eta_{1},\dots \eta_{l}$ of $\bar\fl^{*}$.
Then, for  $\epsilon\in  \bar\fl^{*} \otimes (\g^{R}_{hol}\oplus \fl)$, write
\[\epsilon=\sum \eta_{i}\otimes (X_{i}+Y_{i}).
\]
for $X_{i}\in \g^{R}_{hol}$ and $Y_{i}\in  \fl$.
We have  $[ \epsilon, \epsilon]= \sum \eta_{i}\wedge \eta_{j} [X_{i},X_{j}]$. 
Thus $[ \epsilon, \epsilon]=0$ if and only if $[X_{i},X_{j}]=0$ for every $i,j\in \{1,\dots ,l\}$.
Thus $ {\mathcal K}_{\fl}$ is identified with the analytic space
\[\{(X_{1}+Y_{1},\dots, X_{l}+Y_{l})\in (\g^{R}_{hol}\oplus \fl)^{l}\mid  \forall i,j\in \{1,\dots ,l\}, [X_{i},X_{j}]=0\}.
\]

If $X_{i}=0$ for every $i\in \{1,\dots ,l\}$, then $J_{\fl+\epsilon}$ is also a left-invariant complex structure.
Otherwise $J_{\fl+\epsilon}$  a priori non-invariant.
We will see that this is essentially non-invariant.
For our main interests, we may assume $Y_{i}=0$ for every $i\in \{1,\dots ,l\}$.

We note the general fact observed by Haefliger in \cite{Ha} (see also \cite{LN}) a smooth real submanifold  in a complex  manifold which is transversal to a holomorphic foliation of complementary dimension admits a canonical complex structure.
Consider the complex homogeneous space $G/U^{-}$.
By the Iwasawa decomposition, we have the embedding $K\subset G/U^{-}$.
We notice that this embedding is not holomorphic for the complex structure $J_{\fl}$.
But the complex structure  $J_{\fl}$ is induced by the transversality  of $TK$ and the global holomorphic distribution $\bar\fl$ on $G/U^{-}$.
For  $ \epsilon\in {\mathcal K}_{\fl}$, we consider the  deformed distribution $(\Id+ \epsilon)(\bar\fl)$ on $G/U^{-}$.
If $ \epsilon\in {\mathcal K}_{\fl}$ is sufficiently  small,  $TK$ is also  transverse to $(\Id+ \epsilon)(\bar\fl)$.
This transversality  induces the deformed complex structure $J_{\fl+\epsilon}$ on $K$.
Indeed, we can check that $(\Id+ \bar \epsilon)T_{J_{\fl}}^{1,0}K$ is sent to $T^{1,0}G/U^{-}$ modulo the global  distribution  $(\Id+ \epsilon)(\bar\fl)$ via the embedding $K\subset G/U^{-}$. 
This description is inspired by \cite{LMN}.

\begin{ex}\label{CE}
Consider $K=SU(2)\times SU(2)$.
Regarding  ${\frak k}=\frak{su}(2)\oplus \frak{su}(2)$, take ${\frak k}=\langle T_{1},U_{1}, V_{1}, T_{2}, U_{2}, V_{2}\rangle$ such that 
\[T_{1}=\left( \left(
\begin{array}{ccc}
i& 0  \\
0&    -i
\end{array}
\right), 0\right), U_{1}=\left( \left(
\begin{array}{ccc}
0& 1  \\
1&    0
\end{array}
\right), 0\right),  V_{1}=\left( \left(
\begin{array}{ccc}
0& i  \\
-i&    0
\end{array}
\right), 0\right)
\]
and 
\[T_{2}=\left(0, \left(
\begin{array}{ccc}
i& 0  \\
0&    -i
\end{array}
\right)\right), U_{2}=\left( 0, \left(
\begin{array}{ccc}
0& 1  \\
1&    0
\end{array}
\right)\right),  V_{2}=\left(0, \left(
\begin{array}{ccc}
0& i  \\
-i&    0
\end{array}
\right)\right).
\]
Let $t_{1},u_{1},v_{1}, t_{2}, u_{2}, ,v_{2}$ be the dual basis. 
Take $\frak t= \langle T_{1}, T_{2}\rangle$ and $\frak u=\langle E_{2it_{1}}=U_{1}-i V_{1}, E_{2it_{2}}=U_{2}-i V_{2}\rangle$.
Then we should take $\fl=\langle -bT_{2}+i(T_{1}+aT_{2})\rangle$ for real numbers $a$ and $b\not=0$.
In this case, we have the smooth complete deformation of $J_{\fl}$ parametrized by a neighborhood  of $0$ in $\g^{R}_{hol}\oplus \fl$.

For $G=SL(2,\C)\times SL(2,\C)$, since $U^{-}$ is the lower triangular unipotent subgroup of $G$,  we identify $G/U^{-}\cong (\C^{2}-\{0\})\times (\C^{2}-\{0\})$ by 
\[G/U^{-}\ni \left[ \left(
\begin{array}{ccc}
z_{1}^{\prime}& z_{1}  \\
z_{2}^{\prime}&    z_{2}
\end{array}
\right),  \left(
\begin{array}{ccc}
w_{1}^{\prime}& w_{1}  \\
w_{2}^{\prime}&    w_{2}
\end{array}
\right)\right]\mapsto ((z_{1},z_{2}),(w_{1},w_{2}))\in (\C^{2}-\{0\})\times (\C^{2}-\{0\}).
\]
In this identification, we can regard $\bar\fl=\left\langle z_{1}\frac{\partial}{\partial z_{1}}+z_{2}\frac{\partial}{\partial z_{2}}+c(w_{1}\frac{\partial}{\partial w_{1}}+w_{2}\frac{\partial}{\partial w_{2}})\right\rangle$ where $c=a+bi$.
We can  see that   $SU(2)$ is embedded in $\C^{2}-\{0\}$ as the standard $3$-sphere $S^{3}\subset \C^{2}-\{0\}$.
Thus, the complex manifold $(K,J_{\fl})$ is $S^{3}\times S^{3}$ with the complex structure given by the transversality between  $S^{3}\times S^{3}$ and the global distribution $\left\langle z_{1}\frac{\partial}{\partial z_{1}}+z_{2}\frac{\partial}{\partial z_{2}}+c(w_{1}\frac{\partial}{\partial w_{1}}+w_{2}\frac{\partial}{\partial w_{2}})\right\rangle$.
This is actually a Calabi-Eckmann manifold (see \cite{LN}).  
We  have
\[\g^{R}_{hol}=\left\{\sum_{i,j\le 2}a_{ij} z_{i}\frac{\partial}{\partial z_{j}}\relmiddle| a_{11}+a_{22}=0\right\}  \oplus \left\{\sum_{i,j\le 2}b_{ij} w_{i}\frac{\partial}{\partial w_{j}}\relmiddle| b_{11}+b_{22}=0\right\}\]
on $G/U^{-}\cong (\C^{2}-\{0\})\times (\C^{2}-\{0\})$.
This implies
\[\g^{R}_{hol}\oplus \fl=\left\{\sum_{i,j\le 2}\left(a_{ij} z_{i}\frac{\partial}{\partial z_{j}}+b_{ij} w_{i}\frac{\partial}{\partial w_{j}}\right)\relmiddle| a_{11}+a_{22}+cb_{11}+cb_{22}=0\right\} .
\]
By the above argument, every small deformation of  $J_{\fl}$ is  given by the transversality between   $S^{3}\times S^{3}$ and the global distribution $\left\langle z_{1}\frac{\partial}{\partial z_{1}}+z_{2}\frac{\partial}{\partial z_{2}}+c(w_{1}\frac{\partial}{\partial w_{1}}+w_{2}\frac{\partial}{\partial w_{2}})+\sum_{i,j\le 2}\left(a_{ij} z_{i}\frac{\partial}{\partial z_{j}}+b_{ij} w_{i}\frac{\partial}{\partial w_{j}}\right)\right\rangle$ for complex numbers $a_{ij}, b_{ij}$ with $a_{11}+a_{22}+cb_{11}+cb_{22}=0$.
This fact is also a consequence of  the result in \cite{LN}.

\end{ex}

\section{Cohomology computation for $J_{\fl+\epsilon}$ }

In this section we study the cohomology of      $K$ with a (a priori) non-left-invariant deformed complex structure $ J_{\fl+\epsilon}$ for $\epsilon=\sum \eta_{i}\otimes X_{i}$ such that  $X_{i}\in \g^{R}_{hol}$ satisfy $[X_{i},X_{j}]=0$ for every $i,j\in \{1,\dots ,l\}$.
We fix  a basis $\eta_{1},\dots, \eta_{l}$ of $\bar\fl^{*}$ and consider $X_{1},\dots ,X_{l}$ as parameters of deformations.

\subsection{Kostant's theorem}
Consider  the decompositions $\g={\bar \fu}\oplus \h\oplus \fu$ and 
$\fu=\bigoplus_{\alpha\in \Delta^{+}}\g_{\alpha}$.
Denote by $(,)_{CK}:\g\times\g\to \C$ the Cartan-Killing form.
Then we can choose a basis $\{E_{\alpha}\}_{\alpha\in \Delta^{+}}$  of $\fu$ (resp. $\{E_{-\alpha}\}_{\alpha\in \Delta^{+}}$  of ${\bar \fu}$) so that
\begin{itemize}
\item $E_{\alpha}\in \g_{\alpha}$, 
\item $(E_{\alpha},E_{\beta})_{CK}=\delta_{\alpha\beta}$ for $\alpha,\beta\in \Delta^{+}$,
\item $([E_{\alpha},E_{-\alpha}], A)_{CK}=\alpha(A) $ for $A\in \h$.

\end{itemize}

Identifying   $\fu$ as the  dual space ${\bar \fu}^{\ast}$   via the  Cartan-Killing form,  we can regard $\{E_{\alpha}\}_{\alpha\in \Delta^{+}}$ as a basis of ${\bar \fu}^{\ast}$.
For each (ordered) subset $\Phi=\{\alpha_{1},\dots, \alpha_{j}\}\subset \Delta^{+}$, we write $E_{\Phi}=E_{\alpha_{1}}\wedge \dots \wedge E_{\alpha_{j}}$ as an element of  $\bigwedge {\bar \fu}^{\ast}$.
Let $W$ be the Weyl  group of $\g$.
For $\sigma\in W$, we define the subset $\Phi_{\sigma}\subset  \Delta^{+}$ by $\Phi_{\sigma}=\sigma (-\Delta^{+})\cap  \Delta^{+}$.
The number $\vert \Phi_{\sigma}\vert$ is called the length of $\sigma\in W$.
For a simple root $\alpha$, denoting by $\sigma_{\alpha}\in W$ the reflection corresponding to $\alpha$,
we note $\sigma_{\alpha} (-\Delta^{+})\cap  \Delta^{+}=\{\alpha\}$.
Denote by $W(k) \subset W$ the subset of  elements of  length $k$.

Let $D$ be the set of dominant integral weights.
For $\lambda\in D$,  let $\nu_{\lambda}: \g\to {\rm End}(V^{\lambda})$ be the irreducible representation of $\g$ on a vector space $V^{\lambda}$ whose highest weight is $\lambda$.
We consider the dual representation $\nu^{\ast}_{\lambda}: \g\to {\rm End}((V^{\lambda})^{\ast})$.
This is an irreducible representation whose lowest weight is $-\lambda$.
For each $\sigma\in W$,  take a weight vector $v_{\pm \sigma \lambda}$ of $(V^{\lambda})^{\ast}$  for the weight $\pm \sigma\lambda$ which exists uniquely up to scalar multiplication.

Consider the cochain complex $\bigwedge {\bar \fu}^{\ast}\otimes (V^{\lambda})^{\ast}$  of the Lie algebra ${\bar \fu}$ for the restriction of the representation $\nu_{\lambda}$.
For the cohomology $H^{\ast}({\bar \fu}, (V^{\lambda})^{\ast})$ of this complex, we have:
\begin{thm}[{\cite[Theorem 5.14]{Kos}}]\label{Kost}
\[H^{k}({\bar \fu}, (V^{\lambda})^{\ast})=\bigoplus_{\sigma\in W(k)}\langle [ E_{\Phi_{\sigma}}\otimes v_{-\sigma \lambda}]\rangle.
\]
\end{thm}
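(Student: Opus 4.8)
The plan is to follow Kostant's Hodge-theoretic argument. Write $d\colon C^{k}\to C^{k+1}$ for the Chevalley--Eilenberg differential of the complex $C^{\bullet}=\bigwedge{\bar\fu}^{\ast}\otimes(V^{\lambda})^{\ast}$, and let $\rho$ be the half-sum of the positive roots. First I would endow $C^{\bullet}$ with a Hermitian inner product. Let $\tau$ denote the conjugation of $\g$ with respect to the compact real form $\fk$; then $(X,Y)\mapsto -(X,\tau Y)_{CK}$ is positive definite on $\g$, and it induces a Hermitian inner product on ${\bar\fu}$, hence (via the Cartan--Killing identification ${\bar\fu}^{\ast}\cong\fu$) on $\bigwedge{\bar\fu}^{\ast}$, for which the vectors $E_{\Phi}$ ($\Phi\subset\Delta^{+}$) are orthogonal. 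Combined with the $\fk$-invariant Hermitian inner product on $(V^{\lambda})^{\ast}$ this makes $C^{\bullet}$ a finite-dimensional Hermitian complex; writing $d^{\ast}$ for the adjoint of $d$ and $\Box=dd^{\ast}+d^{\ast}d\ge 0$ for the associated Laplacian, finite-dimensional Hodge theory gives $H^{k}({\bar\fu},(V^{\lambda})^{\ast})\cong\mathcal H^{k}:=\ker\Box\cap C^{k}$.

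Next, since $\fh$ normalizes ${\bar\fu}$ it acts on $C^{\bullet}$ commuting with $d$, and the subalgebra $\ft$ acts by skew-Hermitian operators, so the $\ft$-action commutes with $d^{\ast}$ and with $\Box$; consequently $\Box$ preserves every $\fh$-weight space $C^{k}_{\mu}$. The crux is to compute $d^{\ast}$ explicitly in terms of the structure constants determined by the normalized root vectors $\{E_{\pm\alpha}\}$ and to reorganize the terms of $dd^{\ast}+d^{\ast}d$ so that the mixed contributions assemble into the Casimir element of $\g$; the outcome is that $\Box$ acts on $C_{\mu}$ as the scalar
\[
\Box\big|_{C_{\mu}}=\tfrac12\bigl((\lambda+\rho,\lambda+\rho)_{CK}-(\mu-\rho,\mu-\rho)_{CK}\bigr),
\]
the first term being the Casimir eigenvalue of the irreducible module $(V^{\lambda})^{\ast}$. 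I expect this identity to be the main obstacle: producing the correct formula for $d^{\ast}$ and carrying out the cancellation is the technical heart of the proof. Once it is in place, positivity of $\Box$ forces $(\mu-\rho,\mu-\rho)_{CK}\le(\lambda+\rho,\lambda+\rho)_{CK}$ for every weight $\mu$ of $C^{\bullet}$, and $\mathcal H^{k}=\bigoplus_{\mu}C^{k}_{\mu}$, the sum being over the weights $\mu$ for which equality holds.

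It then remains to analyze these extremal weights, which is elementary weight combinatorics. A weight of $C^{k}$ is a sum of $k$ distinct positive roots plus a weight of $(V^{\lambda})^{\ast}$; using that $\lambda$ is dominant integral (so $\lambda+\rho$ is regular), one shows that the equality $(\mu-\rho,\mu-\rho)_{CK}=(\lambda+\rho,\lambda+\rho)_{CK}$ holds precisely when $\rho-\mu=\sigma(\lambda+\rho)$ for a uniquely determined $\sigma\in W$, that in this case the $(V^{\lambda})^{\ast}$-part must be the one-dimensional extreme weight space of weight $-\sigma\lambda$ and the root part must be $\sum_{\alpha\in\Phi_{\sigma}}\alpha=\rho-\sigma\rho$, and that the only subset of $\Delta^{+}$ with this sum and of the correct cardinality is $\Phi_{\sigma}$ itself. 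Hence $C^{k}_{\mu}=0$ unless $k=|\Phi_{\sigma}|$, the length of $\sigma$, in which degree $C^{k}_{\mu}$ is spanned by $E_{\Phi_{\sigma}}\otimes v_{-\sigma\lambda}$. Summing over $\sigma\in W(k)$ yields the stated decomposition of $H^{k}({\bar\fu},(V^{\lambda})^{\ast})$. An alternative would be to deduce the result from Bott's geometric proof of the Borel--Weil--Bott theorem, but the Hodge-theoretic route above is self-contained and is the one I would pursue.
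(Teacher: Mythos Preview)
The paper does not prove this statement at all: it is quoted as \cite[Theorem 5.14]{Kos} and used as a black box, with only a one-line remark that Kostant's original formulation is for $H^{k}(\fu,V^{\lambda})$ rather than $H^{k}(\bar\fu,(V^{\lambda})^{\ast})$. So there is nothing in the paper to compare your proposal against.

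That said, what you have written is a faithful outline of Kostant's own Hodge-theoretic proof, and its architecture is correct: the Hermitian structure coming from the compact real form, the identification of cohomology with the kernel of the Laplacian, the scalar formula for $\Box$ on each $\h$-weight space via the Casimir, and the combinatorial identification of the extremal weights with $\rho-\sigma(\lambda+\rho)$ and of the corresponding one-dimensional weight spaces with $\langle E_{\Phi_{\sigma}}\otimes v_{-\sigma\lambda}\rangle$. Your formula $\Box|_{C_{\mu}}=\tfrac12\bigl((\lambda+\rho,\lambda+\rho)_{CK}-(\mu-\rho,\mu-\rho)_{CK}\bigr)$ is consistent with the answer, since for $\mu=\langle\Phi_{\sigma}\rangle-\sigma\lambda$ one has $\mu-\rho=-\sigma(\lambda+\rho)$; just be careful with the normalization constant when you actually carry out the Casimir computation. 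If you only need the theorem as input (as the paper does), citing \cite{Kos} suffices.
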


\begin{rem}
In \cite{Kos}, Kostant stated $H^{k}( \fu, V^{\lambda})=\bigoplus_{\sigma\in W(k)}\langle [ E_{-\Phi_{\sigma}}\otimes v_{\sigma \lambda}]\rangle$.
\end{rem}

\subsection{Deformed Borel-Weil-Bott Theorem}
We consider a deformed complex manifold $(K, J_{\fl+\epsilon})$ for $\epsilon=\sum \eta_{i}\otimes X_{i}$ such that  $X_{i}\in \g^{R}_{hol}$ satisfy $[X_{i},X_{j}]=0$ for every $i,j\in \{1,\dots ,l\}$.
Take a dual basis $A_{1},\dots, A_{l}$ of  $\eta_{1},\dots \eta_{l}$.
Then, since $\epsilon$ is sufficiently small, the holomorphic tangent bundle $T^{(1,0)}_{J_{\fl+\epsilon}}K$ of $(K, J_{\fl+\epsilon})$ is $C^{\infty}$-trivialized (not holomorphic) by the global frame $\{ A_{1}+\bar{X}_{1},\dots, A_{l}+\bar{X}_{l},\{E_{\alpha}\}_{\alpha\in \Delta^{+}}\}$.
Since $X_{1},\dots, X_{l}$ are holomorphic for the left-invariant complex structure $J_{\fl}$ and $[X_{i},X_{j}]=0$ for every $i,j\in \{1,\dots ,l\}$, we have
\[[A_{i}+\bar{X}_{i},A_{j}+\bar{X}_{j}]=0
\]
and 
\[[A_{i}+\bar{X}_{i}, E_{\alpha}]=\alpha(A_{i})E_{\alpha}.
\]
Thus the vector space $\g_{\fl}^{\epsilon}=\langle A_{1}+\bar{X}_{1},\dots A_{l}+\bar{X}_{l},\{E_{\alpha}\}\rangle$ is a Lie sub-algebra of the Lie algebra of vector fields on $K$ and we have the isomorphism $\g_{\fl}^{\epsilon}\cong \g_{\fl}$ given by
\[\g_{\fl}^{\epsilon}\ni A_{i}+\bar{X}_{i}\mapsto A_{i}\in \g_{\fl}
\]
and 
\[\g_{\fl}^{\epsilon}\ni E_{\alpha}\mapsto E_{\alpha}\in \g_{\fl}.
\]
For the abelian Lie algebra $\fl^{\epsilon}=\langle A_{1}+\bar{X}_{1},\dots A_{l}+\bar{X}_{l}\rangle$, we have
$\g_{\fl}^{\epsilon}=\fl^{\epsilon}\ltimes \fu$.

Let $\rho:  \bar{\g_{\fl}}\to \gl(V_{\rho})$ be a representation on a complex vector space $V_{\rho}$.
Via the isomorphism $\g_{\fl}^{\epsilon}\cong \g_{\fl}$ we obtain the representation $\rho_{\epsilon}:\bar{\g_{\fl}^{\epsilon}}\to \gl(V_{\rho})$.
We consider the $C^{\infty}$-trivial vector bundle ${\mathcal E}_{\rho}=K\times V_{\rho}$ with the holomorphic structure $\bar\partial _{\rho}=\bar\partial+ \rho_{\epsilon}$ where we regard $\rho_{\epsilon}\in \bar{\g_{\fl}^{\epsilon}}^{\ast}\otimes  \gl(V_{\rho})$ as a $(0,1)$-form on $K$ with values in ${\rm End}({\mathcal E}_{\rho})$.
We will compute the cohomology of the Dolbeault complex $(A^{0,\ast}(K, E_{\rho}),\bar\partial _{\rho})$ with values in the holomorphic  vector bundle ${\mathcal E}_{\rho}$.
Regrading the space $C^{\infty}(K)$ of smooth functions on $K$ as a $\bar{\g_{\fl}^{\epsilon}}$-module, 
we identify  the Dolbeault complex $(A^{0,\ast}(K, {\mathcal E}_{\rho}),\bar\partial _{\rho})$ with the Lie algebra complex
\[\bigwedge \bar{\g_{\fl}^{\epsilon}}^{\ast}\otimes V_{\rho}\otimes C^{\infty}(K).
\]
By the Peter-Weyl theorem, we have the decomposition $L^{2}(K)=\sum_{\lambda\in D} V^{\lambda}\otimes (V^{\lambda })^{\ast}$ as a $\fk^{R}\oplus \fk$-module.
For each component  $V^{\lambda}\otimes (V^{\lambda })^{\ast}$, $V^{\lambda}$ can be considered as a $\g^{R}_{hol}$-module by the following way.
Each $V^{\lambda}\otimes (V^{\lambda })^{\ast}$ consists of matrix elements of a $K$-representation $V^{\lambda}$.
By the unitary trick,  $V^{\lambda}$ is canonically extended to  a holomorphic representation of $G$.
It is known that the the inclusion $K\subset G$ induces the correspondence between  matrix elements of a holomorphic  $G$-representation $V^{\lambda}$ and matrix elements of a $K$-representation $V^{\lambda}$ (see \cite[Section 5.3]{Ak}).
Thus, for each component  $V^{\lambda}\otimes (V^{\lambda })^{\ast}$, $V^{\lambda}$ can be considered as a  representation of the Lie algebra of the right invariant holomorphic vector fields on $G$ which is identified with $\g^{R}_{hol}$ via  the holomorphic action of $G$ on $K$ with the  complex structure $J_{\fl}$ as in the last section.

Take the Hermitian  metric of $T^{(1,0)}_{J_{\fl+\epsilon}}K$ so that the global frame $\{ A_{1}+\bar{X}_{1},\dots ,A_{l}+\bar{X}_{l},\{E_{\alpha}\}_{\alpha\in \Delta^{+}}\}$ is normal orthogonal.
Then, its Laplacian operator preserves each  $\bigwedge\bar{\g_{\fl}^{\epsilon}}^{\ast}\otimes V_{\rho}\otimes V^{\lambda}\otimes (V^{\lambda })^{\ast}$.
Thus, we have an isomorphism
\[H^{\ast}(\bar{\g_{\fl}^{\epsilon}}, V_{\rho}\otimes C^{\infty}(K))\cong \bigoplus_{\lambda\in D} H^{\ast}(\bar{\g_{\fl}^{\epsilon}}, V_{\rho}\otimes V^{\lambda}\otimes (V^{\lambda })^{\ast}).
\]
Since we must regard the $\fk^{R}$-module $V^{\lambda}$ as a $\g^{R}_{hol}$-module as above, 
we notice that  $V^{\lambda}$  is a non-trivial $\bar{\g_{\fl}^{\epsilon}}$-module as $(\bar{A}_{i}+{X}_{i})\cdot v=\nu_{\lambda}(X_{i})v$ for $v\in V^{\lambda}$.
Thus we should compute the cohomology of the Lie algebra complex $\bigwedge \bar{\g_{\fl}^{\epsilon}}^{\ast}\otimes V_{\rho}\otimes V^{\lambda}\otimes (V^{\lambda })^{\ast}$.
We consider the spectral sequence $E^{p,q}_{2}\left (\bigwedge \bar{\g_{\fl}^{\epsilon}}^{\ast}\otimes V_{\rho}\otimes V^{\lambda}\otimes (V^{\lambda })^{\ast} \right)$ of this complex associated with the ideal $\bar{\fu}\subset \bar{\g_{\fl}^{\epsilon}}$.
Then we have
\[E^{p,q}_{2}\left (\bigwedge \bar{\g_{\fl}^{\epsilon}}^{\ast}\otimes V_{\rho}\otimes V^{\lambda}\otimes (V^{\lambda })^{\ast} \right)=
H^{p}(\bar{\fl^{\epsilon}}, H^{q}(\bar{\fu}, V_{\rho}\otimes (V^{\lambda })^{\ast})\otimes V^{\lambda}).
\]
We suppose $\dim V_{\rho}=1$. Then $\rho$ is a character of $\bar{\g_{\fl}}$.
Thus $\rho_{\epsilon}$ is a character of the abelian Lie algebra $\bar{\fl^{\epsilon}}$ and
we have 
\[E^{p,q}_{2}\left (\bigwedge \bar{\g_{\fl}^{\epsilon}}^{\ast}\otimes V_{\rho}\otimes V^{\lambda}\otimes (V^{\lambda })^{\ast} \right)=
H^{p}(\bar{\fl^{\epsilon}}, H^{q}(\bar{\fu},  (V^{\lambda })^{\ast})\otimes V^{\lambda}\otimes V_{\rho}).
\]
By Theorem \ref{Kost}, we have 
\[E^{p,q}_{2}\left (\bigwedge \bar{\g_{\fl}^{\epsilon}}^{\ast}\otimes V_{\rho}\otimes V^{\lambda}\otimes (V^{\lambda })^{\ast} \right)=
H^{p}\left(\bar{\fl^{\epsilon}}, \bigoplus_{\sigma\in W(q)}\langle [ E_{\Phi_{\sigma}}\otimes v_{-\sigma \lambda}]\rangle\otimes V^{\lambda}\otimes V_{\rho}\right).
\]
Since $[X_{i},X_{j}]=0$ for every $i,j\in \{1,\dots ,l\}$, there exists a set $S(\lambda,X_{1},\dots, X_{l})= \{\beta\}$ of maps  $\beta: \{X_{1},\dots X_{l}\}\to \C$ such that we can decompose $V^{\lambda}=\bigoplus_{\beta \in S(\lambda,X_{1},\dots, X_{l})} V^{\lambda}(\beta)$ so that for every $i\in \{1,\dots ,l\}$,  $V^{\lambda}(\beta)$ is the generalized $\beta(X_{i})$-eigenspace of $\nu^{\lambda}(X_{i})$.

\begin{defi}\label{reso}
We say $(\sigma, \lambda, \beta)$ is a $(\rho, X_{1},\dots, X_{l})$-{\em resonance} if $\sigma\in W$, $\lambda\in D$ and $\beta \in S(\lambda,X_{1},\dots, X_{l})$ satisfy the equation
\[\sum_{\alpha\in \Phi_{\sigma}}\alpha(\bar{A}_{i})-\sigma\lambda(\bar{A}_{i})+\rho(\bar{A}_{i})+\beta(X_{i})=0
\]
for every $i\in \{1,\dots ,l\}$.
\end{defi}

\begin{rem}
This definition is inspired  by the resonant  condition in the  Poincar\'e-Dulac theorems  and its application to the deformations of complex structures (\cite{Ha, LN}).
\end{rem}

Denote by $R(\rho, X_{1},\dots, X_{l})$ the set of  $(\rho, X_{1},\dots, X_{l})$-resonances and denote by $R(\rho, X_{1},\dots, X_{l})(\lambda)$ its subspace $\{(\sigma, \lambda, \beta)\in R(\rho, X_{1},\dots, X_{l})\}$ for fixed $\lambda$.
We consider the subspace 
\[ \bigwedge \bar{\fl^{\epsilon}}^{\ast}\otimes  \bigoplus_{R(\rho, X_{1},\dots, X_{l})(\lambda)} \langle E_{\Phi_{\sigma}}\otimes v_{-\sigma \lambda}\rangle \otimes V^{\lambda}(\beta)\otimes V_{\rho}
\]
of $\bigwedge \bar{\g_{\fl}^{\epsilon}}^{\ast}\otimes V_{\rho}\otimes V^{\lambda}\otimes (V^{\lambda })^{\ast}=\bigwedge \bar{\fl^{\epsilon}}^{\ast}\otimes\bigwedge \bar{\fu}^{\ast} \otimes V_{\rho}\otimes V^{\lambda}\otimes (V^{\lambda })^{\ast}$. 
We can say that this subspace is a sub-complex by the following way.
Define the nilpotent $\bar{\fl^{\epsilon}}$-module $\tilde{V}^{\lambda}(\beta)$ so that $\tilde{V}^{\lambda}(\beta)=V^{\lambda}(\beta)$ as a vector space and $(\bar{A}_{i}+{X}_{i})\cdot v=(\nu_{\lambda}(X_{i})-\beta(X_{i}))v$ for $v\in \tilde{V}(\beta)$.
Then each component
\[ \bigwedge \bar{\fl^{\epsilon}}^{\ast}\otimes  \langle E_{\Phi_{\sigma}}\otimes v_{-\sigma \lambda}\rangle \otimes V^{\lambda}(\beta)\otimes V_{\rho}
\]
is identified with the Lie algebra complex  $\bigwedge \bar{\fl^{\epsilon}}^{\ast}\otimes \tilde{V}^{\lambda}(\beta)$ by the equation of the Definition \ref{reso} shifting the degree by $-\vert \Phi_{\sigma}\vert$.
\begin{prop}
The inclusion 
\[\bigwedge \bar{\fl^{\epsilon}}^{\ast}\otimes  \bigoplus_{(\sigma, \lambda, \beta) \in R(\rho, X_{1},\dots, X_{l})(\lambda)} \langle E_{\Phi_{\sigma}}\otimes v_{-\sigma \lambda}\rangle \otimes V^{\lambda}(\beta)\otimes V_{\rho}\subset \bigwedge \bar{\g_{\fl}^{\epsilon}}^{\ast}\otimes V_{\rho}\otimes V^{\lambda}\otimes (V^{\lambda })^{\ast}
\]
induces a cohomology isomorphism.
\end{prop}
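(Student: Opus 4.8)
Fix $\lambda\in D$ and write $C:=\bigwedge\bar{\g_{\fl}^{\epsilon}}^{\ast}\otimes V_{\rho}\otimes V^{\lambda}\otimes(V^{\lambda})^{\ast}$ for the ambient complex and $C_{\mathrm{res}}\subset C$ for the subcomplex appearing in the statement. The plan is to compare, through the inclusion $C_{\mathrm{res}}\hookrightarrow C$, the spectral sequences of these two complexes for the filtration attached to the ideal $\bar{\fu}\subset\bar{\g_{\fl}^{\epsilon}}$ already used above (the one with $E_{2}^{p,q}(C)=H^{p}(\bar{\fl^{\epsilon}},H^{q}(\bar{\fu},(V^{\lambda})^{\ast})\otimes V^{\lambda}\otimes V_{\rho})$). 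Both $C$ and $C_{\mathrm{res}}$ are finite dimensional, so these filtrations are bounded, the spectral sequences converge to $H^{\ast}(C)$ and $H^{\ast}(C_{\mathrm{res}})$, and the inclusion is a morphism of filtered complexes; hence by the comparison theorem it suffices to show that it induces an isomorphism on $E_{2}$-pages.

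The only substantive ingredient is an elementary fact about abelian Lie algebras. Let $\mathfrak{a}$ be a finite dimensional abelian Lie algebra and let $M=\bigoplus_{\chi}M_{\chi}$ be the decomposition of a finite dimensional $\mathfrak{a}$-module into generalized weight spaces, indexed by characters $\chi\in\mathfrak{a}^{\ast}$; then $\bigwedge\mathfrak{a}^{\ast}\otimes M_{\chi}$ is a subcomplex of the Chevalley--Eilenberg complex of $\mathfrak{a}$ with values in $M$, it is acyclic whenever $\chi\neq0$, and therefore $H^{\ast}(\mathfrak{a},M)=H^{\ast}(\mathfrak{a},M_{0})$. I would establish this via the Cartan homotopy formula: for $\chi\neq0$ pick $a\in\mathfrak{a}$ with $\chi(a)\neq0$; since $\mathfrak{a}$ is abelian the Lie derivative $L_{a}$ on $\bigwedge\mathfrak{a}^{\ast}\otimes M_{\chi}$ acts only through the module, hence as $\chi(a)\Id$ plus a nilpotent operator, so it is invertible; it commutes with the differential and with the contraction $\iota_{a}$, so $\iota_{a}L_{a}^{-1}$ is a contracting homotopy.

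Granting this, computing the two $E_{2}$-pages is bookkeeping. On $C$: the class $[E_{\Phi_{\sigma}}\otimes v_{-\sigma\lambda}]$ is an $\bar{\fl^{\epsilon}}$-weight vector, so after inserting the $\bar{\fl^{\epsilon}}$-module decomposition $V^{\lambda}=\bigoplus_{\beta}V^{\lambda}(\beta)$ the coefficient module $\bigoplus_{\sigma}\langle[E_{\Phi_{\sigma}}\otimes v_{-\sigma\lambda}]\rangle\otimes V^{\lambda}\otimes V_{\rho}$ of $E_{1}^{\bullet,q}(C)$ splits into the summands $\langle[E_{\Phi_{\sigma}}\otimes v_{-\sigma\lambda}]\rangle\otimes V^{\lambda}(\beta)\otimes V_{\rho}$ with $\sigma\in W(q)$, on which $\bar{A}_{i}+X_{i}$ acts, up to a nilpotent operator, by the scalar on the left-hand side of the equation in Definition \ref{reso}; thus such a summand is a generalized $\bar{\fl^{\epsilon}}$-weight space whose weight vanishes exactly when $(\sigma,\lambda,\beta)$ is a $(\rho,X_{1},\dots,X_{l})$-resonance, and the fact above kills all non-resonant summands, leaving
\[
E_{2}^{p,q}(C)=\bigoplus_{\substack{(\sigma,\lambda,\beta)\in R(\rho,X_{1},\dots,X_{l})(\lambda)\\ |\Phi_{\sigma}|=q}}H^{p}\bigl(\bar{\fl^{\epsilon}},\langle[E_{\Phi_{\sigma}}\otimes v_{-\sigma\lambda}]\rangle\otimes V^{\lambda}(\beta)\otimes V_{\rho}\bigr).
\]
On $C_{\mathrm{res}}$: by the identification recorded just before the statement, its $(\sigma,\beta)$-component is the Chevalley--Eilenberg complex $\bigwedge\bar{\fl^{\epsilon}}^{\ast}\otimes\tilde{V}^{\lambda}(\beta)$ (tensored with $V_{\rho}$) sitting entirely in $\bar{\fu}$-degree $|\Phi_{\sigma}|$, so $d_{0}$ vanishes on $C_{\mathrm{res}}$, one has $E_{1}(C_{\mathrm{res}})=C_{\mathrm{res}}$ with the evident regrading, and the induced $d_{1}$ is the Chevalley--Eilenberg differential of $\bar{\fl^{\epsilon}}$. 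Hence $E_{2}^{p,q}(C_{\mathrm{res}})$ is the very same direct sum of $\bar{\fl^{\epsilon}}$-cohomologies over resonances, and the map of $E_{2}$-pages induced by the inclusion identifies it with $E_{2}(C)$ term by term. Since both spectral sequences converge, this yields the asserted isomorphism $H^{\ast}(C_{\mathrm{res}})\cong H^{\ast}(C)$ induced by the inclusion.

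The step I expect to demand the most care is checking that the filtration restricts to $C_{\mathrm{res}}$ in this ``diagonal'' way with $d_{0}\equiv0$ -- equivalently, that on $E_{1}$-pages the inclusion is precisely the embedding of the span of the resonant Kostant representatives into $\bigwedge\bar{\fl^{\epsilon}}^{\ast}\otimes H^{\ast}(\bar{\fu},(V^{\lambda})^{\ast})\otimes V^{\lambda}\otimes V_{\rho}$, compatibly with the $\bar{\fl^{\epsilon}}$-actions; once this is pinned down together with the abelian-algebra acyclicity fact, the rest is formal.
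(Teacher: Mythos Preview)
Your argument is correct and follows the same route as the paper: compare the Hochschild--Serre spectral sequences of the two complexes for the ideal $\bar{\fu}\subset\bar{\g_{\fl}^{\epsilon}}$ and verify that the inclusion is an isomorphism on $E_{2}$-pages using the vanishing of abelian Lie algebra cohomology in nonzero generalized weights. The paper cites this last fact as \cite[Lemma~3.1]{Kasd} rather than proving it, whereas you supply the Cartan homotopy argument explicitly; otherwise the structure of the two proofs is identical.
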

\begin{proof}
Consider the spectral sequence of $\bigwedge \bar{\fl^{\epsilon}}^{\ast}\otimes  \bigoplus_{R(\rho, X_{1},\dots, X_{l})(\lambda)} \langle E_{\Phi_{\sigma}}\otimes v_{-\sigma \lambda}\rangle \otimes V^{\lambda}(\beta)\otimes V_{\rho}$ for the filtration which is the restriction of the filtration on $\bigwedge \bar{\g_{\fl}^{\epsilon}}^{\ast}\otimes V_{\rho}\otimes V^{\lambda}\otimes (V^{\lambda })^{\ast}$ associated with the ideal $\bar{\fu}\subset \bar{\g_{\fl}^{\epsilon}}$.
Then its $E_{2}$-term is
\[H^{p}\left(\bar{\fl^{\epsilon}}, \bigoplus_{\substack{R(\rho, X_{1},\dots, X_{l})(\lambda)\\ \sigma\in W(q)}}\langle [ E_{\Phi_{\sigma}}\otimes v_{-\sigma \lambda}]\rangle\otimes V^{\lambda}\otimes V_{\rho}\right).
\]
By simple arguments on the cohomology of abelian Lie algebra(see \cite[Lemma 3.1]{Kasd}), we can  say that this $E_{2}$-term is isomorphic to 
\[E^{p,q}_{2}\left (\bigwedge \bar{\g_{\fl}^{\epsilon}}^{\ast}\otimes V_{\rho}\otimes V^{\lambda}\otimes (V^{\lambda })^{\ast} \right)=
H^{p}\left(\bar{\fl^{\epsilon}}, \bigoplus_{\sigma\in W(q)}\langle [ E_{\Phi_{\sigma}}\otimes v_{-\sigma \lambda}]\rangle\otimes V^{\lambda}\otimes V_{\rho}\right).
\]
This implies that  the inclusion we  should consider induces an isomorphism on the $E_{2}$-term of the spectral sequences and hence 
 the proposition follows.
\end{proof}

This proposition gives the following Borel-Weil-Bott type theorem.

\begin{thm}\label{BWBD}
\[H^{\ast}(K, {\mathcal E}_{\rho})\cong \bigoplus_{(\sigma,\lambda,\beta)\in R(\rho, X_{1},\dots, X_{l})} H^{\ast-\vert \Phi_{\sigma}\vert}(\bar{\fl^{\epsilon}}, \langle E_{\Phi_{\sigma}}\otimes v_{-\sigma \lambda}\rangle\otimes V^{\lambda}(\beta)\otimes V_{\rho}).
\]
In particular, if $R(\rho, X_{1},\dots, X_{l})=\o$, then $H^{\ast}(K, {\mathcal E}_{\rho})=0$. 
\end{thm}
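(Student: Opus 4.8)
The plan is to obtain Theorem \ref{BWBD} as an immediate consequence of the preceding proposition together with the earlier structural reductions. First I would recall from the text that, via the Peter--Weyl decomposition $L^{2}(K)=\sum_{\lambda\in D}V^{\lambda}\otimes(V^{\lambda})^{\ast}$ and the fact that the chosen Hermitian metric makes the Laplacian preserve each summand $\bigwedge\bar{\g_{\fl}^{\epsilon}}^{\ast}\otimes V_{\rho}\otimes V^{\lambda}\otimes(V^{\lambda})^{\ast}$, one has
\[
H^{\ast}(K,{\mathcal E}_{\rho})\cong H^{\ast}(\bar{\g_{\fl}^{\epsilon}},V_{\rho}\otimes C^{\infty}(K))\cong\bigoplus_{\lambda\in D}H^{\ast}\bigl(\bar{\g_{\fl}^{\epsilon}},V_{\rho}\otimes V^{\lambda}\otimes(V^{\lambda})^{\ast}\bigr).
\]
So it suffices to identify each $\lambda$-summand with $\bigoplus_{(\sigma,\lambda,\beta)\in R(\rho,X_{1},\dots,X_{l})(\lambda)}H^{\ast-|\Phi_{\sigma}|}(\bar{\fl^{\epsilon}},\langle E_{\Phi_{\sigma}}\otimes v_{-\sigma\lambda}\rangle\otimes V^{\lambda}(\beta)\otimes V_{\rho})$ and then sum over $\lambda$.

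Next I would invoke the proposition just proved: the inclusion of the subcomplex $\bigwedge\bar{\fl^{\epsilon}}^{\ast}\otimes\bigoplus_{R(\rho,X_{1},\dots,X_{l})(\lambda)}\langle E_{\Phi_{\sigma}}\otimes v_{-\sigma\lambda}\rangle\otimes V^{\lambda}(\beta)\otimes V_{\rho}$ into $\bigwedge\bar{\g_{\fl}^{\epsilon}}^{\ast}\otimes V_{\rho}\otimes V^{\lambda}\otimes(V^{\lambda})^{\ast}$ is a quasi-isomorphism. Then I would use the identification spelled out just before the proposition: each component $\bigwedge\bar{\fl^{\epsilon}}^{\ast}\otimes\langle E_{\Phi_{\sigma}}\otimes v_{-\sigma\lambda}\rangle\otimes V^{\lambda}(\beta)\otimes V_{\rho}$ is, after the degree shift by $-|\Phi_{\sigma}|$ coming from the wedge factor $E_{\Phi_{\sigma}}$, isomorphic as a complex to the Lie algebra complex $\bigwedge\bar{\fl^{\epsilon}}^{\ast}\otimes\widetilde{V}^{\lambda}(\beta)$ of the abelian algebra $\bar{\fl^{\epsilon}}$ with coefficients in the nilpotent module $\widetilde{V}^{\lambda}(\beta)$ --- here the resonance equation of Definition \ref{reso} is exactly what guarantees that the line $\langle E_{\Phi_{\sigma}}\otimes v_{-\sigma\lambda}\rangle\otimes V_{\rho}$, tensored with $V^{\lambda}(\beta)$, carries the character of $\bar{\fl^{\epsilon}}$ that cancels against the $\beta$-twist, so that the twisted differential matches the untwisted one on $\widetilde{V}^{\lambda}(\beta)$. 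Taking cohomology of this subcomplex therefore yields precisely $\bigoplus_{(\sigma,\lambda,\beta)\in R(\rho,X_{1},\dots,X_{l})(\lambda)}H^{\ast-|\Phi_{\sigma}|}(\bar{\fl^{\epsilon}},\langle E_{\Phi_{\sigma}}\otimes v_{-\sigma\lambda}\rangle\otimes V^{\lambda}(\beta)\otimes V_{\rho})$, and summing over $\lambda\in D$ gives the stated formula. The final sentence is then trivial: if $R(\rho,X_{1},\dots,X_{l})=\varnothing$ the right-hand side is an empty sum, so $H^{\ast}(K,{\mathcal E}_{\rho})=0$.

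The only genuine subtlety --- and the point I would be most careful about in writing this out --- is bookkeeping of the degree shift and the direct-sum-over-$\lambda$ convergence. One should check that the isomorphism $H^{\ast}(\bar{\g_{\fl}^{\epsilon}},V_{\rho}\otimes C^{\infty}(K))\cong\bigoplus_{\lambda}H^{\ast}(\cdots)$ genuinely holds at the level of $C^{\infty}$ (not merely $L^{2}$) sections; this is where the harmonic-theory argument using the Laplacian preserving each Peter--Weyl block is needed, and it is the standard elliptic-regularity argument, so I would merely cite it as already done in the text above. I do not expect any real obstacle: the theorem is a formal repackaging of the proposition and the explicit identification of components, with the resonance condition doing all the work. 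If anything, the one place to state cleanly is that for $(\sigma,\lambda,\beta)\notin R$ the corresponding component of the $E_{2}$-page of the subcomplex's spectral sequence vanishes --- but that too is contained in the proof of the proposition via \cite[Lemma 3.1]{Kasd}.
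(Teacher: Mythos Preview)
Your proposal is correct and follows exactly the paper's route: the paper states Theorem \ref{BWBD} as an immediate consequence of the preceding proposition (``This proposition gives the following Borel-Weil-Bott type theorem'') with no further proof, and you have spelled out precisely the intended deduction --- Peter--Weyl reduction to fixed $\lambda$, the quasi-isomorphism from the proposition, and the identification of each component with the shifted $\bar{\fl^{\epsilon}}$-complex already described in the text before the proposition.
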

For the identity element $e\in W$, define the subset $R_{e}(\rho, X_{1},\dots, X_{l})=\{(e, \lambda, \beta)\in R(\rho, X_{1},\dots, X_{l})\}$ of $R(\rho, X_{1},\dots, X_{l})$.
Then:
\begin{cor}
\[ H^{0}(K, {\mathcal E}_{\rho})=\bigoplus_{(\sigma,\lambda,\beta)\in R_{e}(\rho, X_{1},\dots, X_{l})}\left(\langle  v_{-\lambda}\rangle\otimes  V^{\lambda}(\beta)\otimes V_{\rho}\right)^{\langle X_{1},\dots, X_{l}\rangle}.\]

\end{cor}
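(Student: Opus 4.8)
The plan is to extract the statement directly from Theorem \ref{BWBD} by analyzing the right-hand side in degree zero. First I would observe that a cohomology class of total degree $0$ can only arise from summands with $\vert\Phi_\sigma\vert=0$, which forces $\sigma=e$ (the identity has length $0$, and it is the unique such element of $W$); hence only triples $(e,\lambda,\beta)\in R_e(\rho,X_1,\dots,X_l)$ contribute, and for each such triple the contribution is $H^{0}(\bar{\fl^\epsilon},\langle v_{-\lambda}\rangle\otimes V^\lambda(\beta)\otimes V_\rho)$. So the corollary reduces to identifying this $\bar{\fl^\epsilon}$-invariant subspace with the $\langle X_1,\dots,X_l\rangle$-invariant subspace appearing in the statement.

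Next I would unwind the $\bar{\fl^\epsilon}$-module structure. Recall $\bar{\fl^\epsilon}=\langle \bar A_1+X_1,\dots,\bar A_l+X_l\rangle$ is abelian, so $H^0$ is just the joint kernel of the operators $\bar A_i+X_i$ acting on $\langle v_{-\lambda}\rangle\otimes V^\lambda(\beta)\otimes V_\rho$. On the one-dimensional piece $\langle v_{-\lambda}\rangle$ the operator $\bar A_i+X_i$ acts by the weight of $v_{-\lambda}$, namely $-\lambda(\bar A_i)$ (here $\sigma=e$ so $-\sigma\lambda=-\lambda$); on $V_\rho$ it acts by the character value $\rho(\bar A_i)$; and on $V^\lambda(\beta)$ it acts as $\nu_\lambda(X_i)$, whose action on the generalized eigenspace $V^\lambda(\beta)$ has semisimple part the scalar $\beta(X_i)$ plus a nilpotent part. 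Summing these contributions, $\bar A_i+X_i$ acts on the tensor product as the scalar $-\lambda(\bar A_i)+\rho(\bar A_i)+\beta(X_i)$ plus a nilpotent endomorphism coming entirely from the nilpotent part of $\nu_\lambda(X_i)$ on $V^\lambda(\beta)$. The resonance equation for $(e,\lambda,\beta)$ (Definition \ref{reso} with $\Phi_\sigma=\o$) says precisely that the scalar part vanishes, i.e.\ $-\lambda(\bar A_i)+\rho(\bar A_i)+\beta(X_i)=0$ for all $i$. Therefore on a triple in $R_e$ the operator $\bar A_i+X_i$ acts purely nilpotently, by the same endomorphism (tensored with the identity) as $\nu_\lambda(X_i)-\beta(X_i)$ acts on $V^\lambda(\beta)$.

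Consequently $H^0(\bar{\fl^\epsilon},\langle v_{-\lambda}\rangle\otimes V^\lambda(\beta)\otimes V_\rho)$ is the simultaneous kernel of the nilpotent operators $\nu_\lambda(X_i)-\beta(X_i)$ on $\langle v_{-\lambda}\rangle\otimes V^\lambda(\beta)\otimes V_\rho$. Since $\nu_\lambda(X_i)-\beta(X_i)$ restricted to $V^\lambda(\beta)$ is nilpotent, its kernel is exactly the genuine $\beta(X_i)$-eigenspace, and taking the joint kernel over all $i$ gives the subspace on which every $X_i$ acts by the scalar $\beta(X_i)$; I would denote this the $\langle X_1,\dots,X_l\rangle$-invariant part and note it matches the notation $(\,\cdot\,)^{\langle X_1,\dots,X_l\rangle}$ used in the statement (the $\langle v_{-\lambda}\rangle$ and $V_\rho$ factors being one-dimensional ride along untouched). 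Assembling the direct sum over $(\sigma,\lambda,\beta)=(e,\lambda,\beta)\in R_e(\rho,X_1,\dots,X_l)$ yields the claimed formula. The only mildly delicate point — the ``main obstacle,'' such as it is — is keeping the bookkeeping of the three separate weight/eigenvalue contributions consistent so that the resonance equation is seen to be exactly the condition that kills the semisimple part, leaving a purely nilpotent action whose kernel is the asserted invariant subspace; everything else is a direct specialization of Theorem \ref{BWBD}.
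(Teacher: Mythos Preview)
Your argument is correct and is exactly the natural specialization the paper has in mind: the corollary is stated in the paper without proof, as an immediate consequence of Theorem~\ref{BWBD}, and your unwinding (degree~$0$ forces $\sigma=e$; then $H^0$ of the abelian Lie algebra $\bar{\fl^\epsilon}$ is the joint kernel, which by the resonance equation reduces to the kernel of the nilpotent operators $\nu_\lambda(X_i)-\beta(X_i)$ on $V^\lambda(\beta)$) is precisely the computation implicit in the paper's identification of the complex with $\bigwedge\bar{\fl^\epsilon}^*\otimes\tilde V^\lambda(\beta)$ just before Proposition~4.4. Your careful remark about how the superscript $\langle X_1,\dots,X_l\rangle$ should be read here (as invariants for the shifted/nilpotent action rather than for $\nu_\lambda(X_i)$ itself) is a useful clarification of notation that the paper leaves tacit.
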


If the set $\{X_{1},\dots, X_{l}\}$ is contained in some Cartan sub-algebra of $\g^{R}_{hol}$, then all $\nu^{\lambda}(X_{i})$ are simultaneously diagonalizable and so each nilpotent $\bar{\fl^{\epsilon}}$-module $\tilde{V}(\beta)$ is trivial.
Thus in this case we have
\[H^{\ast}(K, {\mathcal E}_{\rho})\cong \bigoplus_{(\sigma,\lambda,\beta)\in R(\rho, X_{1},\dots, X_{l})} \bigwedge \bar{\fl^{\epsilon}}^{\ast}\otimes \langle E_{\Phi_{\sigma}}\otimes v_{-\sigma \lambda}\rangle\otimes  V^{\lambda}(\beta)\otimes V_{\rho}.
\]

\begin{defi}
$\{X_{1},\dots, X_{l}\}$ is called non-resonant  for $\rho$ if $R(\rho, X_{1},\dots, X_{l})=\emptyset $ or $R(\rho, X_{1},\dots, X_{l})=\{(\sigma, \lambda, 0)\}$.
\end{defi}

\begin{rem}\label{regen}
Suppose $\rho$ is   integral.
Then we can say that  the  non-resonant condition is generic in the following sense.
Since  $\sum_{\alpha\in \Phi_{\sigma}}\alpha-\sigma\lambda+\rho$ is integral for  any $\sigma\in W$ and $\lambda\in D$,
we can take  $A_{1},\dots, A_{l}$ so that  $\sum_{\alpha\in \Phi_{\sigma}}\alpha(\bar{A}_{i})-\sigma\lambda(\bar{A}_{i})+\rho(\bar{A}_{i})\in\Q[\sqrt{-1}]$ for  all $\sigma, \lambda $.
Let $w_{1},\dots ,w_{r}$ be the fundamental weights of $\g$ associated with $\h$.
Take $V^{w_{i}}=\bigoplus_{\beta_{ij}\in S(w_{i},X_{1},\dots, X_{l})} V^{\lambda}(\beta_{ij})$ as above.
Since any $\lambda\in D$ is $\lambda=\sum n_{i} w_{i}$ ($n_{i}\in \N$), 
each $\beta\in S(\lambda ,X_{1},\dots, X_{l})$ is $\beta=\sum m_{ij}\beta_{ij}$ ($m_{ij}\in \N$).
If  $\beta_{ij}(X_{k})$ is irrational or $0$ for every $i,j,k$, then $\{X_{1},\dots, X_{l}\}$ is  non-resonant for $\rho$ since we can say that   the equality  
\[\sum_{\alpha\in \Phi_{\sigma}}\alpha(\bar{A}_{i})-\sigma\lambda(\bar{A}_{i})+\rho(\bar{A}_{i})+\beta(X_{i})=0
\] holds for every $i\in \{1,\dots ,l\}$ only if $\beta=0$.

The condition $R(\rho, X_{1},\dots, X_{l})=\{(\sigma, \lambda, 0)\}$ occurs only if $\sum_{\alpha\in \Phi_{\sigma}}\alpha-\sigma\lambda+\rho=0$ for some $\sigma\in W$ and $\lambda\in D$.
We notice that $\sum_{\alpha\in \Phi_{\sigma}}\alpha-\sigma\lambda+\rho=0$ for some $\sigma\in W$ and $\lambda\in D$ if and only if $\rho$ is integral  and   $\rho+\frac{1}{2}\sum_{\alpha \in \Delta^{+}}\alpha$ is regular in the sense of \cite[Section 5.9]{Kos}.
By Theorem \ref{BWBD}, if $\{X_{1},\dots, X_{l}\}$ is  non-resonant  for $\rho$, then we have
\[ H^{\ast}(K, {\mathcal E}_{\rho})\cong  H^{\ast-\vert \Phi_{\sigma}\vert}(\bar{\fl^{\epsilon}}, \langle E_{\Phi_{\sigma}}\otimes v_{-\sigma \lambda}\rangle\otimes V^{\lambda}(0)\otimes V_{\rho})
\] for some $\sigma\in W$
if  $\rho$ is integral  and   $\rho+\frac{1}{2}\sum_{\alpha \in \Delta^{+}}\alpha$ is regular or 
\[ H^{\ast}(K, {\mathcal E}_{\rho})\cong 0\]
otherwise.
We notice   that we have
\[H^{\ast}(\bar{\fl^{\epsilon}}, \langle E_{\Phi_{\sigma}}\otimes v_{-\sigma \lambda}\rangle\otimes V^{\lambda}(0)\otimes V_{\rho}) \cong H^{\ast}(\C^{l}, V^{\lambda}(0))\]
 by the equality $\sum_{\alpha\in \Phi_{\sigma}}\alpha-\sigma\lambda+\rho=0$  and  $\dim  \langle E_{\Phi_{\sigma}}\otimes v_{-\sigma \lambda}\rangle\otimes V_{\rho}= 1$
where $V^{\lambda}(0)$ is the  generalized eigenspace corresponding to the eigenvalue $0$ for the linear  operators  $\nu^{\lambda}(X_{1}),\dots, \nu^{\lambda}(X_{l})$ and  $H^{\ast}(\C^{l}, V^{\lambda}(0))$ is the Lie algebra cohomology of the abelian Lie algebra  $\C^{l}$ with values in the representation $\C^{l}\to {\rm End}(V^{\lambda}(0))$ so that for the standard basis $e_{1},\dots e_{l}$ of  $\C^{l}$, each $e_{i}$ represents the linear  operator of $\nu^{\lambda}(X_{i})$. 
\end{rem}

Obviously $\{0,\dots, 0\}$ is  non-resonant  for any $\rho$, in this case
\[ H^{\ast}(K, {\mathcal E}_{\rho})\cong 0\]
or 
\[ H^{\ast}(K, {\mathcal E}_{\rho})\cong \bigwedge \bar{\fl^{\epsilon}}^{\ast}\otimes V^{\lambda}.
\]
This is a consequence of Borel-Weil-Bott Theorem as in \cite{B}, \cite[Section 6]{Kos}.

\subsection{Cohomology of the holomorphic tangent bundle}
We consider the holomorphic tangent bundle $T^{(1,0)}_{J_{\fl+\epsilon}} K$ of the complex manifold $(K, J_{\fl+\epsilon})$.
This is the vector bundle generated by global ${\mathcal C}^{\infty}$-distribution  $\g_{\fl}^{\epsilon}=\langle A_{1}+\bar{X}_{1},\dots A_{l}+\bar{X}_{l},\{E_{\alpha}\}\rangle$.
Let $p^{(1,0)}_{\epsilon}:TK_{\C}\to T^{(1,0)}_{J_{\fl+\epsilon}} K$ be the projection.
Then we consider the space $\h^{(1,0)}_{\epsilon}=p^{(1,0)}_{\epsilon}(\h)$ of sections of $T^{(1,0)}_{J_{\fl+\epsilon}} K$.
By  $\ft_{\C}=\h$,  $\h^{(1,0)}_{\epsilon}=\{B-\sqrt{-1}J_{\fl+\epsilon}(B)\mid B\in \ft\}$.
We can take sufficiently small $\epsilon$ so that $\dim \fl=\dim \h^{(1,0)}_{\epsilon}$.
Since we have $[B, E_{-\alpha}]=-\alpha(B)E_{-\alpha}$ and $[B, A_{i}+\bar{X}_{i}]=0$ for any $B\in \h$,
we can say  $\bar\partial (\h^{(1,0)}_{\epsilon})=0$ where $\bar\partial$ is the Dolbeault operator of $T^{(1,0)}_{J_{\fl+\epsilon}} K$.
By $[E_{\alpha}, E_{-\alpha}]\in \sqrt{-1}\ft$, we have $p^{(1,0)}_{\epsilon}([ E_{\alpha} ,E_{-\alpha}])\in \h^{(1,0)}_{\epsilon}$.
This implies that the vector space $V_{\tau}$ of sections of $T^{(1,0)}_{J_{\fl+\epsilon}} K$ generated by $\h^{(1,0)}_{\epsilon}$ and $\{E_{\alpha}\}$ is a $\bar{\g_{\fl}^{\epsilon}}$-module with a representation $\tau_{\epsilon}:\bar{\g_{\fl}^{\epsilon}}\to {\rm End}(V_{\tau})$.
For the representation $\tau:\bar{\g_{\fl}}\to {\rm End}(V_{\tau})$ associated with $\tau_{\epsilon}$ via the isomorphism  $\g_{\fl}^{\epsilon}\cong \g_{\fl}$, we have $T^{(1,0)}_{J_{\fl+\epsilon}} K\cong {\mathcal E}_{\tau}$.
We compute the cohomology $H^{\ast}(K,T^{(1,0)}_{J_{\fl+\epsilon}} K)=H^{\ast}(K,{\mathcal E}_{\tau})$.

Let ${\mathcal E}_{\h^{(1,0)}_{\epsilon}}\subset {\mathcal E}_{\tau}$ be the sub-bundle of ${\mathcal E}_{\tau}$  generated by the distribution $\h^{(1,0)}_{\epsilon}$.
Since $\bar\partial (\h^{(1,0)}_{\epsilon})=0$, ${\mathcal E}_{\h^{(1,0)}_{\epsilon}}$ is a trivial holomorphic bundle.
Thus, we have 
\[H^{\ast}(K,{\mathcal E}_{\h^{(1,0)}_{\epsilon}})\cong H^{0,\ast}(K)\otimes \h^{(1,0)}_{\epsilon}.
\]
If $\{X_{1},\dots, X_{l}\}$ is  non-resonant  for the trivial representation $\rho=0$,
we have
\[H^{0,\ast}(K)\cong \bigwedge \bar{\fl^{\epsilon}}^{\ast}.
\]
We consider the inclusion $\bigwedge \bar{\fl^{\epsilon}}^{\ast}\otimes \h^{(1,0)}_{\epsilon}\subset A^{0,*}(M, T^{1,0}_{J_{\fl+\epsilon}}K)$.
Then, taking the Hermitian metric on $T^{1,0}_{J_{\fl+\epsilon}}K$ associated with the global frame $A_{1}+\bar{X}_{1},\dots A_{l}+\bar{X}_{l},\{E_{\alpha}\}$, we can say that $\bigwedge \bar{\fl^{\epsilon}}^{\ast}\otimes \h^{(1,0)}_{\epsilon}$ consists harmonic forms in $A^{0,*}(M, T^{1,0}_{J_{\fl+\epsilon}}K)$.
Thus  the inclusion $\bigwedge \bar{\fl^{\epsilon}}^{\ast}\otimes \h^{(1,0)}_{\epsilon}\subset A^{0,*}(M, T^{1,0}_{J_{\fl+\epsilon}}K)$ induces an injection $H^{\ast}(K,{\mathcal E}_{\h^{(1,0)}_{\epsilon}})\cong \bigwedge \bar{\fl^{\epsilon}}^{\ast}\otimes \h^{(1,0)}_{\epsilon} \hookrightarrow H^{\ast}(K,T^{(1,0)}_{J_{\fl+\epsilon}} K)$.

We consider the quotient bundle ${\mathcal E}_{\fu}={\mathcal E}_{\tau}/{\mathcal E}_{\h^{(1,0)}_{\epsilon}}$.
We have $H^{\ast}(K, {\mathcal E}_{\tau})\cong  \bigwedge \bar{\fl^{\epsilon}}^{\ast}\otimes \h^{(1,0)}_{\epsilon}\oplus H^{\ast}(K, {\mathcal E}_{\fu})$ by the above argument.
We decompose $\Delta=\Delta_{1}\sqcup\dots, \sqcup\Delta_{m}$ such that each $\Delta_{i}$ is an irreducible root system.
Then, we have a direct sum ${\mathcal E}_{\fu}={\mathcal E}_{1}\oplus\dots \oplus {\mathcal E}_{m}$ where  each ${\mathcal E}_{i}$ is the sub-bundle of ${\mathcal E}_{\fu}$ generated by $\{E_{\alpha}\}_{\alpha\in \Delta_{i}\cap \Delta^{+}}$.

We consider the irreducible root system $\Delta_{i}$.
For each $\alpha\in \Delta_{i}\cap \Delta^{+}$, we denote by ${\mathcal V}_{\alpha}$  the smooth sub-bundle of $ {\mathcal E}_{i}$ generated by the vector field $E_{\alpha}$.
Then, by the theory of roots, we can take a total ordering $\le$ of $\Delta_{i}\cap \Delta^{+}$ such that 
for any $\beta\in \Delta_{i}\cap \Delta^{+}$ the smooth sub-bundle ${\mathcal E}_{\le \beta}=\bigoplus_{\alpha\le \beta}{\mathcal V}_{\alpha}$ of $ {\mathcal E}_{i}$ is a holomorphic sub-bundle  of $ {\mathcal E}_{i}$ and the maximal one in $ \Delta_{i}\cap \Delta^{+}$ associated with this order is a highest weight of an adjoint representation  of $\g$.
Writing $ \Delta_{i}\cap \Delta^{+}=\{\alpha_{1},\dots ,\alpha_{M}\}$ so that $\alpha_{j}\le \alpha_{k}$ if and only if $j\le k$,
then we have a sequence 
\[ {\mathcal E}_{\le \alpha_{1}}\subset {\mathcal E}_{\le \alpha_{2}} \subset \dots \subset {\mathcal E}_{\le \alpha_{M}}={\mathcal E}_{i}
\]
of holomorphic sub-bundles of  $ {\mathcal E}_{i}$ and ${\mathcal E}_{\le \alpha_{j}}/{\mathcal E}_{\le \alpha_{j-1}}\cong {\mathcal E}_{\alpha_{j}}$.

\begin{prop}{\rm (cf. \cite[Section 4.8]{Ak})}\label{maro}
If $\{X_{1},\dots, X_{l}\}$ is  non-resonant  for every $\alpha_{j}\in \Delta_{i}\cap \Delta^{+}$,
then the quotient map $ {\mathcal E}_{i}\to {\mathcal E}_{\le \alpha_{M}}/{\mathcal E}_{\le \alpha_{M-1}}$
induces a cohomology isomorphism 
\[H^{\ast}(K,{\mathcal E}_{i})\cong H^{\ast}(K, {\mathcal E}_{\alpha_{M}})\cong H^{\ast}(\bar{\fl^{\epsilon}}, \langle  v_{- \alpha_{M}}\rangle\otimes V^{\alpha_{M}}(0)\otimes \langle [ E_{\alpha_{M}}]\rangle).
\]

\end{prop}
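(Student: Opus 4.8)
The plan is to exploit the filtration ${\mathcal E}_{\le \alpha_{1}}\subset \dots \subset {\mathcal E}_{\le \alpha_{M}}={\mathcal E}_{i}$ by holomorphic sub-bundles, whose successive quotients are the line bundles ${\mathcal E}_{\alpha_{j}}$. Each ${\mathcal E}_{\alpha_{j}}$ is of the form ${\mathcal E}_{\rho}$ for $\rho$ the character of $\bar{\g_{\fl}}$ corresponding (after the identification $\g_{\fl}^{\epsilon}\cong\g_{\fl}$) to the weight $\alpha_{j}$ acting on the line $\langle[E_{\alpha_{j}}]\rangle$; more precisely, the $\bar{\fu}$-action is trivial on this line and the $\bar{\fl}$-action is via $\alpha_{j}$. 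So Theorem \ref{BWBD} applies to each ${\mathcal E}_{\alpha_{j}}$. First I would use the long exact cohomology sequences associated to the short exact sequences $0\to {\mathcal E}_{\le \alpha_{j-1}}\to {\mathcal E}_{\le \alpha_{j}}\to {\mathcal E}_{\alpha_{j}}\to 0$ to reduce the computation of $H^{\ast}(K,{\mathcal E}_{i})$ to the $H^{\ast}(K,{\mathcal E}_{\alpha_{j}})$.

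The key step is to show that $H^{\ast}(K,{\mathcal E}_{\alpha_{j}})=0$ for every $j<M$, while $H^{\ast}(K,{\mathcal E}_{\alpha_{M}})$ computes the answer. Since $\{X_{1},\dots,X_{l}\}$ is non-resonant for each $\alpha_{j}$, Theorem \ref{BWBD} together with Remark \ref{regen} says that $H^{\ast}(K,{\mathcal E}_{\alpha_{j}})$ is nonzero only if $\rho=\alpha_{j}$ is integral and $\alpha_{j}+\tfrac12\sum_{\alpha\in\Delta^{+}}\alpha$ is regular, i.e. only if $\sum_{\alpha\in\Phi_{\sigma}}\alpha-\sigma\lambda+\alpha_{j}=0$ for some $\sigma\in W$, $\lambda\in D$. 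The root $\alpha_{j}$ is always integral, so the content is the regularity/solvability condition. The point of the total ordering on $\Delta_i\cap\Delta^{+}$ with $\alpha_M$ a highest root of an adjoint factor is that the corresponding weight is such that $\alpha_M$ arises from the adjoint representation; one recognizes $\langle v_{-\alpha_M}\rangle\otimes V^{\alpha_M}(0)\otimes\langle[E_{\alpha_M}]\rangle$ as precisely the contribution of the $\sigma=e$, $\lambda=\alpha_M$ resonance (using that $\sum_{\alpha\in\Phi_e}\alpha-e\cdot\alpha_M+\alpha_M=0$), and the non-resonance hypothesis forces $\beta=0$. For $j<M$, one must argue that no such solution $(\sigma,\lambda)$ exists — equivalently, that $\alpha_j+\tfrac12\sum\alpha$ is singular, lying on a wall; this is the Bott-type vanishing already implicit in the classical computation $H^{\ast}(G/B,T^{1,0}G/B)=\g^{R}_{hol}$ in degree $0$ only, now imported into the deformed setting. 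Once all intermediate $H^{\ast}(K,{\mathcal E}_{\alpha_j})$ vanish, the long exact sequences collapse and the quotient map ${\mathcal E}_i\to {\mathcal E}_{\alpha_M}$ induces the claimed isomorphism, with the final identification of $H^{\ast}(K,{\mathcal E}_{\alpha_M})$ coming directly from Theorem \ref{BWBD} (using $\dim\langle E_{\Phi_e}\otimes v_{-\alpha_M}\rangle\otimes V_{\alpha_M}$ appropriately and that $\Phi_e=\emptyset$ so the degree shift is zero).

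The main obstacle I anticipate is the combinatorial verification that for $j<M$ the weight $\alpha_j$ gives no resonance at all — i.e. that $\alpha_j+\tfrac12\sum_{\alpha\in\Delta^{+}}\alpha$ is never regular for non-highest positive roots $\alpha_j$ — and that one has genuinely arranged the ordering so that only $\alpha_M$ (the highest root of the irreducible factor) survives. This is a statement about roots and the Weyl vector that should follow from standard facts in Kostant's framework (\cite{Kos}) and Bott's original argument (\cite{B}), but it requires care: one needs that $\alpha_j-\sigma^{-1}(\text{something})$ never lands in $D$ shifted appropriately, which amounts to checking that $\alpha_j+\delta$ (with $\delta$ the half-sum of positive roots) has a nontrivial stabilizer in $W$ for $j<M$. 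I would handle this by the same device used in Proposition \ref{cohlI}: recognize that away from the top the relevant $B$-module cohomology on $G/B$ vanishes by Bott, and transport this vanishing through the spectral sequence / resonance dictionary provided by Theorem \ref{BWBD}.
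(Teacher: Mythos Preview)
Your overall architecture (filtration by holomorphic line subbundles, long exact sequences, Theorem~\ref{BWBD} on each graded piece) matches the paper's. The genuine gap is your key step: it is \emph{not} true that $H^{\ast}(K,{\mathcal E}_{\alpha_{j}})=0$ for every $j<M$, and the statement you plan to verify---that $\alpha_{j}+\tfrac{1}{2}\sum_{\alpha\in\Delta^{+}}\alpha$ is singular for every non-highest positive root $\alpha_{j}$---is simply false. In types $B_{r}$, $C_{r}$, $F_{4}$ and $G_{2}$ there exist positive roots $\alpha_{j}\neq\alpha_{M}$ with $\alpha_{j}+\delta$ regular (see the table in \cite[p.~129]{Ak}), so by Remark~\ref{regen} the corresponding line bundles have nonzero cohomology. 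Appealing to Bott's vanishing $H^{p}(G/B,T^{1,0}G/B)=0$ for $p\ge1$ does not rescue this: that vanishing is a statement about the \emph{total} bundle, obtained precisely after cancellations among nontrivial contributions from individual lines, not because each line-bundle cohomology is zero.

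What the paper actually does in these exceptional types is identify the extra surviving roots explicitly (for $B_{r}$, $C_{r}$, $F_{4}$ a pair $\alpha'=\alpha''+\beta$ with $\beta$ simple and $\sigma_{\beta}(\alpha')=\alpha'$; for $G_{2}$ the roots $\alpha_{2},\alpha_{4}$), choose the ordering so that these sit adjacently in the filtration, and then show that the connecting homomorphism $H^{\ast}(K,{\mathcal E}_{\alpha_{j}})\to H^{\ast+1}(K,{\mathcal E}_{\le\alpha_{j-1}})$ is an \emph{isomorphism}, so that the two nonzero pieces annihilate each other and $H^{\ast}(K,{\mathcal E}_{\le\alpha_{j}})=0$. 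Only after this cancellation does the iteration of long exact sequences yield $H^{\ast}(K,{\mathcal E}_{i})\cong H^{\ast}(K,{\mathcal E}_{\alpha_{M}})$. Your plan needs this case analysis (or an argument replacing it) rather than the blanket singularity claim.
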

\begin{proof}
For each $k$, we have the long exact sequence
\[\xymatrix{0\ar[r]&H^{0}(K, {\mathcal E}_{\le \alpha_{k-1}}) \ar[r]& H^{0}(K, {\mathcal E}_{\le \alpha_{k}}) \ar[r] & H^{0}(K, {\mathcal E}_{ \alpha_{k}}) \ar[r]& H^{1}(K, {\mathcal E}_{\le \alpha_{k-1}}) \ar[r]&\dots .
}
\]

In case $R(\alpha_{k}, X_{1},\dots, X_{l})=\{(\sigma, \lambda, 0)\}$ if and only if $\alpha_{k}=\alpha_{M}$, 
by Theorem \ref{BWBD},
the proposition easily follows from  an iteration of computations on long exact sequences.

We consider the case $R(\alpha_{k}, X_{1},\dots, X_{l})=\{(\sigma, \lambda, 0)\}$ holds  for $\alpha_{k}\not=\alpha_{M}$.
For such $\alpha_{k}$,  $\alpha_{k}+ \frac{1}{2}\sum_{\Delta^{+}}\alpha$ is regular.
This happens only if the type of $\Delta_{i}$ is $B_{r}$, $C_{r}$, $F_{4}$ or $G_{2}$. (see \cite[Section 4.7]{Ak}).
Suppose $\Delta_{i}$ is $B_{r}$, $C_{r}$ or  $F_{4}$.
Then, as the table of \cite[Page 129]{Ak},  we have  $\alpha^{\prime}, \alpha^{\prime\prime}\in \Delta^{+}-\{\alpha_{M}\}$ such that 
\begin{enumerate}
\item $\alpha^{\prime}=\alpha^{\prime\prime}+\beta$ for some simple root $\beta$.
\item For $\alpha_{k}\in\Delta_{i}\cap \Delta^{+} $,  $\alpha_{k}+ \frac{1}{2}\sum_{\Delta^{+}}\alpha$ is regular if and only if $\alpha_{k}=\alpha^{\prime}$, $\alpha^{\prime\prime}$ or $\alpha_{M}$. $\alpha_{k}\in D$ if and only  if $\alpha_{k}=\alpha^{\prime}$ or $\alpha_{M}$.
\item $\sigma_{\beta}(\alpha^{\prime})=\alpha^{\prime}$ where $\sigma_{\beta}\in W$ is the reflection corresponding to $\beta$.
\end{enumerate}
By the first condition, we can take an ordering such that $\alpha_{j}=\alpha^{\prime}$ and $\alpha_{j-1}=\alpha^{\prime\prime}$ for some $j$.
By the second and third condition, by   an iteration of computations on long exact sequences with Theorem \ref{BWBD}, the quotient ${\mathcal E}_{\le \alpha_{j-1}}\to {\mathcal E}_{\le \alpha_{j-1}}/{\mathcal E}_{\le \alpha_{j-2}}$ induces a cohomology isomorphism
\[H^{\ast}(K, {\mathcal E}_{\le \alpha_{j-1}})\cong H^{\ast-1}(\bar{\fl^{\epsilon}}, \langle E_{\beta}\otimes v_{- \alpha^{\prime}}\rangle\otimes V^{\alpha^{\prime}}(0)\otimes \langle [ E_{\alpha^{\prime\prime}}]\rangle)
\]
and we have 
\[H^{\ast}(K, {\mathcal E}_{ \alpha_{j}})\cong  H^{\ast}(\bar{\fl^{\epsilon}}, \langle  v_{- \alpha^{\prime}}\rangle\otimes V^{\alpha^{\prime}}(0)\otimes \langle [ E_{\alpha^{\prime}}]\rangle).
\]
Thus, for the long exact sequence
\[\xymatrix{0\ar[r]&H^{0}(K, {\mathcal E}_{\le \alpha_{j-1}}) \ar[r]& H^{0}(K, {\mathcal E}_{\le \alpha_{j}}) \ar[r] & H^{0}(K, {\mathcal E}_{ \alpha_{j}}) \ar[r]& H^{1}(K, {\mathcal E}_{\le \alpha_{j-1}}) \ar[r]&\dots 
}, 
\]
we can easily say that 
the boundary  map $H^{\ast}(K, {\mathcal E}_{ \alpha_{j}})\to H^{\ast+1}(K, {\mathcal E}_{\le \alpha_{j-1}})$ is an isomorphism and hence $H^{\ast}(K, {\mathcal E}_{\le \alpha_{j}})=0$.
Now, by the first condition of $ \alpha^{\prime}, \alpha^{\prime\prime}$, the proposition follows from  an iteration of computations on long exact sequences.

Suppose $\Delta_{i}$ is $G_{2}$.
Then, for simple roots $\alpha$, $\beta$ corresponding to the Cartan matrix $\left(
\begin{array}{ccc}
2& -3  \\
-1&    2
\end{array}
\right)$, we have
\[\Delta_{i}\cap \Delta^{+}=\{\alpha_{1}=\alpha,\alpha_{2}=\beta ,\alpha_{3}=\alpha+\beta,\alpha_{4}=2\alpha+\beta,\alpha_{5}=3\alpha+\beta,\alpha_{6}=3\alpha+2\beta\}.
\]
By  the table of \cite[Page 129]{Ak}, $\alpha_{k}+ \frac{1}{2}\sum_{\Delta^{+}}\alpha$ is regular if and only if $\alpha_{k}=\alpha_{2}$, $\alpha_{4}$ or $\alpha_{6}$ and 
 $\alpha_{k}\in D$ if and only  if $\alpha_{k}=\alpha_{4}$ or $\alpha_{6}$.
Since $\sigma_{\alpha}(\alpha_{4})=\alpha+\beta$, by the long exact sequence,   the quotient ${\mathcal E}_{\le \alpha_{2}}\to {\mathcal E}_{\le \alpha_{2}}/{\mathcal E}_{\le \alpha_{1}}$ induces a cohomology isomorphism
\[H^{\ast}(K, {\mathcal E}_{\le \alpha_{2}})\cong H^{\ast-1}(\bar{\fl^{\epsilon}}, \langle E_{\alpha}\otimes v_{- \alpha-\beta}\rangle\otimes V^{\alpha_{4}}(0)\otimes \langle [ E_{\beta}]\rangle)
\]
and   the quotient ${\mathcal E}_{\le \alpha_{4}}/{\mathcal E}_{\le \alpha_{2}}\to {\mathcal E}_{\le \alpha_{4}}/{\mathcal E}_{\le \alpha_{3}}$
 induces a cohomology isomorphism
\[H^{\ast}(K, {\mathcal E}_{\le \alpha_{4}}/{\mathcal E}_{\le \alpha_{2}})\cong H^{\ast}(\bar{\fl^{\epsilon}}, \langle v_{- 2\alpha-\beta}\rangle\otimes V^{\alpha_{4}}(0)\otimes \langle [ E_{2\alpha+\beta}]\rangle).
\]
We note that  $\langle E_\alpha\otimes v_{-\alpha-\beta}\otimes [E_\beta]\rangle$ and $\langle v_{-2\alpha-\beta}\otimes [E_{2\alpha+\beta}]\rangle $ are trivial $\bar{\fl^{\epsilon}}$-modules.
Thus,  for the long exact sequence 
\[\xymatrix{0\ar[r]&H^{0}(K, {\mathcal E}_{\le \alpha_{2}}) \ar[r]& H^{0}(K, {\mathcal E}_{\le \alpha_{4}}) \ar[r] & H^{0}(K, {\mathcal E}_{\le \alpha_{4}}/{\mathcal E}_{\le \alpha_{2}}) \ar[r]& H^{1}(K, {\mathcal E}_{\le \alpha_{2}}) \ar[r]&\dots 
}, 
\]
we can easily say that  the boundary  map $H^{\ast}(K,{\mathcal E}_{\le \alpha_{4}}/{\mathcal E}_{\le \alpha_{2}})\to H^{\ast+1}(K, {\mathcal E}_{\le \alpha_{2}})$ is an isomorphism and hence $H^{\ast}(K, {\mathcal E}_{\le \alpha_{4}})=0$.
Hence the proposition follows from the  long exact sequences for ${\mathcal E}_{\le \alpha_{5}}$ and ${\mathcal E}_{\le \alpha_{6}}$.
\end{proof}

We identify $\g\cong \g_{hol}^{R}$.
Consider the decomposition $\g=\g_{1}\oplus\dots \oplus \g_{m}$ associated with $\Delta=\Delta_{1}\sqcup\dots, \sqcup\Delta_{m}$.
Then,   each $\g_{i}$ is an irreducible $\g$-module  $V^{\alpha_{M}}$ corresponding to $\alpha_{M}\in D$.
This proposition with Remark \ref{regen} implies the following theorem.
\begin{thm}\label{HT}
If $\{X_{1},\dots, X_{l}\}$ is  non-resonant  for every $\alpha\in \Delta^{+}$ and the trivial representation $0$, then
we have an isomorphism of $\C$-vector space
\[H^{\ast}(K,T^{1,0}_{J_{\fl+\epsilon}}K)\cong \bigwedge  \C^{l}\otimes   \C^{l}\oplus H^{\ast}(\C^{l}, \g(0))
\]
where $l=\frac{1}{2}{\rm rank }\,K$, $\g(0)$ is the  generalized eigenspace corresponding to the eigenvalue $0$ for the adjoint operators of $X_{1},\dots, X_{l}$ and  $H^{\ast}(\C^{l}, \g(0))$ is the Lie algebra cohomology of the abelian Lie algebra  $\C^{l}$ with values in the representation $\C^{l}\to {\rm End}(\g(0))$ so that for the standard basis $e_{1},\dots e_{l}$ of  $\C^{l}$, each $e_{i}$ represents the adjoint operator of $X_{i}$. 
In particular
\begin{itemize}
\item We have an isomorphism of $\C$-vector space
 \[H^{0}(K,T^{1,0}_{J_{\fl+\epsilon}}K)\cong     \C^{l}\oplus  \g^{\langle X_{1},\dots, X_{l}\rangle}.\]
\item If $X_{1},\dots, X_{l}$ are contained in a same Cartan subalgebra of $\g$, then 
\[H^{\ast}(K,T^{1,0}_{J_{\fl+\epsilon}}K)\cong \bigwedge  \C^{l}\otimes   (\C^{l}\oplus  \g^{\langle X_{1},\dots, X_{l}\rangle}).
\]
\end{itemize}
\end{thm}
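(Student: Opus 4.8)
The plan is to assemble Theorem \ref{HT} from the pieces already in place: the splitting $H^{\ast}(K,{\mathcal E}_{\tau})\cong \bigwedge\bar{\fl^{\epsilon}}^{\ast}\otimes\h^{(1,0)}_{\epsilon}\oplus H^{\ast}(K,{\mathcal E}_{\fu})$, the root-system decomposition ${\mathcal E}_{\fu}={\mathcal E}_{1}\oplus\dots\oplus{\mathcal E}_{m}$, and Proposition \ref{maro} applied to each irreducible factor. First I would observe that, since $\{X_{1},\dots,X_{l}\}$ is non-resonant for every $\alpha\in\Delta^{+}$, in particular it is non-resonant for every $\alpha_{j}\in\Delta_{i}\cap\Delta^{+}$, so Proposition \ref{maro} gives
\[
H^{\ast}(K,{\mathcal E}_{i})\cong H^{\ast}(\bar{\fl^{\epsilon}},\langle v_{-\alpha_{M}^{(i)}}\rangle\otimes V^{\alpha_{M}^{(i)}}(0)\otimes\langle[E_{\alpha_{M}^{(i)}}]\rangle)
\]
for each $i$, where $\alpha_{M}^{(i)}$ is the highest root of $\Delta_{i}$. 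Using the identification $\g\cong\g^{R}_{hol}$ of the excerpt, $\g_{i}$ is precisely the irreducible $\g$-module $V^{\alpha_{M}^{(i)}}$, and the generalized $0$-eigenspace for the adjoint operators $\mathrm{ad}\,X_{1},\dots,\mathrm{ad}\,X_{l}$ restricted to $\g_{i}$ is $V^{\alpha_{M}^{(i)}}(0)=\g_{i}(0)$. Summing over $i$ and recalling $\g(0)=\bigoplus_{i}\g_{i}(0)$ gives $H^{\ast}(K,{\mathcal E}_{\fu})\cong H^{\ast}(\bar{\fl^{\epsilon}},\g(0))$ once the one-dimensional factors $\langle v_{-\alpha_{M}}\rangle$, $\langle[E_{\alpha_{M}}]\rangle$ are absorbed and the relation $\sum_{\alpha\in\Phi_{\sigma}}\alpha-\sigma\lambda+\rho=0$ is used exactly as in Remark \ref{regen} to rewrite the Lie-algebra cohomology of $\bar{\fl^{\epsilon}}$ as that of the abelian $\C^{l}$.

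Next I would pin down the first summand. Since $\{X_{1},\dots,X_{l}\}$ is non-resonant for the trivial representation $\rho=0$, we have $H^{0,\ast}(K)\cong\bigwedge\bar{\fl^{\epsilon}}^{\ast}$ as already noted, and $\bigwedge\bar{\fl^{\epsilon}}^{\ast}\cong\bigwedge\C^{l}$; the trivial bundle ${\mathcal E}_{\h^{(1,0)}_{\epsilon}}$ contributes $H^{0,\ast}(K)\otimes\h^{(1,0)}_{\epsilon}\cong\bigwedge\C^{l}\otimes\C^{l}$, using $\dim\h^{(1,0)}_{\epsilon}=\dim\fl=l$. Combining with the previous paragraph yields
\[
H^{\ast}(K,T^{1,0}_{J_{\fl+\epsilon}}K)\cong\bigwedge\C^{l}\otimes\C^{l}\oplus H^{\ast}(\C^{l},\g(0)),
\]
which is the asserted isomorphism. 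For the degree-zero statement I would take the $H^{0}$ part: $\bigwedge^{0}\C^{l}\otimes\C^{l}=\C^{l}$, and $H^{0}(\C^{l},\g(0))$ is by definition the joint kernel of the operators $\mathrm{ad}\,X_{1},\dots,\mathrm{ad}\,X_{l}$ on $\g(0)$, which is the same as the joint kernel on all of $\g$, i.e.\ $\g^{\langle X_{1},\dots,X_{l}\rangle}$; hence $H^{0}(K,T^{1,0}_{J_{\fl+\epsilon}}K)\cong\C^{l}\oplus\g^{\langle X_{1},\dots,X_{l}\rangle}$. For the last bullet, if $X_{1},\dots,X_{l}$ lie in a common Cartan subalgebra then all $\nu^{\lambda}(X_{i})$ are simultaneously diagonalizable, so $\g(0)$ is a trivial $\C^{l}$-module and $H^{\ast}(\C^{l},\g(0))\cong\bigwedge\C^{l}\otimes\g(0)=\bigwedge\C^{l}\otimes\g^{\langle X_{1},\dots,X_{l}\rangle}$; then $\bigwedge\C^{l}\otimes\C^{l}\oplus\bigwedge\C^{l}\otimes\g^{\langle X_{1},\dots,X_{l}\rangle}=\bigwedge\C^{l}\otimes(\C^{l}\oplus\g^{\langle X_{1},\dots,X_{l}\rangle})$ as claimed.

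The only genuinely delicate point is the bookkeeping in the middle paragraph: one must check that the injection $H^{\ast}(K,{\mathcal E}_{\h^{(1,0)}_{\epsilon}})\hookrightarrow H^{\ast}(K,T^{1,0}_{J_{\fl+\epsilon}}K)$ coming from the harmonic representatives is actually a direct-summand inclusion, so that the long exact sequence of $0\to{\mathcal E}_{\h^{(1,0)}_{\epsilon}}\to{\mathcal E}_{\tau}\to{\mathcal E}_{\fu}\to0$ splits and gives $H^{\ast}(K,{\mathcal E}_{\tau})\cong\bigwedge\bar{\fl^{\epsilon}}^{\ast}\otimes\h^{(1,0)}_{\epsilon}\oplus H^{\ast}(K,{\mathcal E}_{\fu})$; this is exactly the content of the harmonicity remark preceding Proposition \ref{maro}, so I would cite that. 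The rest is the reduction of each $H^{\ast}(\bar{\fl^{\epsilon}},\langle E_{\Phi_{\sigma}}\otimes v_{-\sigma\lambda}\rangle\otimes V^{\lambda}(0))$ to $H^{\ast}(\C^{l},V^{\lambda}(0))$, which is the same identification already recorded at the end of Remark \ref{regen} (the twist by $\langle E_{\Phi_{\sigma}}\otimes v_{-\sigma\lambda}\rangle$ is a one-dimensional $\bar{\fl^{\epsilon}}$-module whose weight is killed by the resonance equation, hence it is trivial as a $\C^{l}$-module and only shifts nothing in the final answer). No new estimates or constructions are needed; everything is an application of Proposition \ref{maro}, the splitting, and Remark \ref{regen}.
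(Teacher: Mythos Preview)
Your proposal is correct and follows exactly the route the paper takes: the paper's entire proof is the sentence ``This proposition with Remark \ref{regen} implies the following theorem,'' and you have faithfully unpacked that into the splitting $H^{\ast}(K,{\mathcal E}_{\tau})\cong\bigwedge\bar{\fl^{\epsilon}}^{\ast}\otimes\h^{(1,0)}_{\epsilon}\oplus H^{\ast}(K,{\mathcal E}_{\fu})$, the application of Proposition \ref{maro} to each irreducible factor ${\mathcal E}_{i}$, and the identification via Remark \ref{regen}. Your explicit observation that injectivity from harmonicity forces the connecting maps in the long exact sequence to vanish (hence the splitting as vector spaces) is a helpful clarification of something the paper only asserts.
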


\begin{cor}\label{nobil}
If $\{X_{1},\dots, X_{l}\}\not=\{0,\dots, 0\}$ and $\{X_{1},\dots, X_{l}\}$ is  non-resonant  for every $\alpha\in \Delta^{+}$ and the trivial representation $0$, then the complex manifold $(K, J_{\fl+\epsilon})$ is not biholomorphic to $K$ with  any left-invariant or right-invariant complex structure.

\end{cor}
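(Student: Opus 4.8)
The plan is to compare the dimension of the space of holomorphic vector fields on $(K, J_{\fl+\epsilon})$ with the corresponding dimension for any left-invariant (or right-invariant) complex structure on $K$, and to show that under the hypotheses of Corollary \ref{nobil} these two numbers are never equal, so that no biholomorphism can exist. First I would invoke Theorem \ref{HT}, which gives
\[
\dim_{\C} H^{0}(K, T^{1,0}_{J_{\fl+\epsilon}}K) = l + \dim_{\C}\g^{\langle X_{1},\dots,X_{l}\rangle},
\]
where $l = \tfrac12\operatorname{rank}K$. On the other hand, by Proposition \ref{cohlI} applied to any left-invariant complex structure $J_{\fl'}$ on $K$ (recall that every left-invariant complex structure arises as some $J_{\fl'}$ after a suitable choice of maximal torus), the space of holomorphic vector fields is $\g^{R}_{hol}\oplus \fl'$, which has dimension $\dim_{\C}\g + l$. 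Since $K$ is compact semisimple, $\dim_{\C}\g = \dim K = 2l + |\Delta|$ where $|\Delta| = \dim_{\C}(\fu\oplus\bar\fu) > 0$; hence a left-invariant complex structure always has $\dim_{\C}H^{0} = \dim_{\C}\g + l$, which strictly exceeds $2l$, indeed exceeds $l + \dim_{\C}\h = l + \operatorname{rank}K$.

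Next I would bound $l + \dim_{\C}\g^{\langle X_{1},\dots,X_{l}\rangle}$ from above and show it is strictly smaller than $\dim_{\C}\g + l$. The key point is that $\g^{\langle X_{1},\dots,X_{l}\rangle}$ is a proper subspace of $\g$ whenever some $X_{i}\neq 0$: a nonzero element $X_{i}\in\g$ in a semisimple Lie algebra is never central, so there exists $Y\in\g$ with $[X_{i},Y]\neq 0$, whence $Y\notin\g^{\langle X_{1},\dots,X_{l}\rangle}$. Thus $\dim_{\C}\g^{\langle X_{1},\dots,X_{l}\rangle} \le \dim_{\C}\g - 1 < \dim_{\C}\g$, and therefore
\[
\dim_{\C}H^{0}(K, T^{1,0}_{J_{\fl+\epsilon}}K) = l + \dim_{\C}\g^{\langle X_{1},\dots,X_{l}\rangle} < l + \dim_{\C}\g = \dim_{\C}H^{0}(K, T^{1,0}_{J_{\fl'}}K)
\]
for every left-invariant $J_{\fl'}$. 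Since $\dim_{\C}H^{0}(K, T^{1,0}K)$ — the dimension of the Lie algebra of holomorphic vector fields, equivalently the dimension of the identity component of the automorphism group of the complex manifold — is a biholomorphism invariant, $(K, J_{\fl+\epsilon})$ cannot be biholomorphic to $K$ with any left-invariant complex structure. For the right-invariant case one observes that right-invariant complex structures on $K$ are obtained from left-invariant ones by the diffeomorphism $k\mapsto k^{-1}$ (or equivalently by passing to the opposite group structure), so the same dimension count $\dim_{\C}\g + l$ applies, and the same strict inequality rules out a biholomorphism.

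The main obstacle is making precise the claim that the space of holomorphic vector fields is a biholomorphic invariant of the right numerical type and that the left-invariant comparison value is genuinely $\dim_{\C}\g + l$ for all left-invariant structures; this rests on Proposition \ref{cohlI} together with the stated fact that every left-invariant complex structure is of the form $J_{\fl'}$ for an appropriate choice of maximal torus, and on the semisimplicity of $\fk$ to guarantee $\g^{R}_{hol}\cap\fl' = \{0\}$ so the sum is direct. A minor subtlety to address is the case where $X_{i}\neq 0$ for some $i$ but the numbers could a priori coincide: the strict inequality above shows they cannot, so the only remaining check is that the hypothesis $\{X_{1},\dots,X_{l}\}\neq\{0,\dots,0\}$ together with the non-resonance hypothesis indeed places us in the regime where Theorem \ref{HT} applies, which it does by assumption. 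I would close by remarking that this also shows $J_{\fl+\epsilon}$ is genuinely a non-invariant complex structure — not merely a priori non-invariant — completing the paper's stated goal.
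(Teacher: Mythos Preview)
Your proposal is correct and follows essentially the same argument as the paper's proof: compute $\dim H^{0}(K,T^{1,0}_{J_{\fl+\epsilon}}K)=l+\dim\g^{\langle X_{1},\dots,X_{l}\rangle}$ via Theorem \ref{HT}, compare with $l+\dim\g$ from Proposition \ref{cohlI}, and use semisimplicity to get a strict inequality when some $X_{i}\neq 0$. The paper's proof is much terser, but your added details (why the centralizer is proper, biholomorphism invariance of $\dim H^{0}$, and the inversion argument for the right-invariant case) are exactly the points one would fill in.
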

\begin{proof}
By Proposition \ref{cohlI}, the dimension of the vector space of holomorphic vector fields on $K$ with a left-invariant (resp. right-invariant) complex structure is $l +\dim \g$ and we have
\[l +\dim \g>l+\dim  \g^{\langle X_{1},\dots, X_{l}\rangle}= \dim H^{0}(K,T^{1,0}_{J_{\fl+\epsilon}}K)).\]

\end{proof}

As we noticed in Remark \ref{regen}, for each $\alpha\in \Delta^{+}$ the non-resonant condition is generic.
Hence we can say that there exists a small deformation of a left-invariant complex structure on $K$ which is not biholomorphic to  any left-invariant or right-invariant complex structure.

\begin{ex}
We consider Example \ref{CE}.
Take $A_{1}=-bT_{2}+i(T_{1}+aT_{2})$ and small $X_{1}\in \g^{R}_{hol}$.
We have $\Delta^{+}=\{2it_{1}\}\cup \{ 2it_{2}\}$ and $W=\{\pm 1\}\times \{\pm 1\}$.
Any irreducible representation $V^{\lambda}$ of $K$ corresponds to $\lambda= mit_{1}+nit_{2}$ for $m,n\in \N$.
We can easily say that if $X_{1}$ is sufficiently small, then $X_{1}$ is  non-resonant  for every $\alpha\in \Delta^{+}$ and the trivial representation $0$.
By the above result, if  $X_{1}$ is sufficiently small, then we have an isomorphism of $\C$-vector space
\[H^{0}(K,T^{1,0}_{J_{\fl+\epsilon}}K)\cong \C \oplus ({\frak sl}_{2}(\C))^{\langle X_{1}\rangle} \oplus ({\frak sl}_{2}(\C))^{\langle X_{1}\rangle}.
 \]

In the resonant case, Theorem \ref{HT}  does not hold.
Let $b=1$, $a=0$ and  $X_{1}=\frac{1}{3}T^{R}_{1}$.
Then we have
\[v^{3it^{R}_{1}}\otimes \left(v^{-3it_{1}}\otimes E_{2it_{1}}+ E_{-2it_{1}}\cdot (v^{-3it_{1}})\otimes p^{1,0}_{\epsilon}([E_{2it_{1}}, E_{-2it_{1}}]) \right)\in H^{0}(K,T^{1,0}_{J_{\fl+\epsilon}}K)
\]
where $v^{\pm 3it_{1}}$ and $v^{\pm 3it^{R}_{1}}$ are highest (lowest) weight vectors in representations  $V^{3it_{1}}$ and $V^{3it^{R}_{1}}$ with highest  (lowest) weights $\pm 3it_{1}$ and $\pm 3it^{R}_{1}$ respectively.
Consider $G/U^{-}= (\C^{2}-\{0\})\times (\C^{2}-\{0\})$.
Then  the complex manifold $(K, J_{\fl+\epsilon})$ is  given by the transversality between  $S^{3}\times S^{3}$ and  $\left\langle \frac{4}{3}z_{1}\frac{\partial}{\partial z_{1}}+\frac{2}{3}z_{2}\frac{\partial}{\partial z_{2}}+w_{1}\frac{\partial}{\partial w_{1}}+w_{2}\frac{\partial}{\partial w_{2}}\right\rangle$.
We have a non-trivial holomorphic vector field on $S^{3}\times S^{3}$ with such complex structure which is given by the $(\frac{4}{3},\frac{2}{3},1,1)$-resonant vector field $z_{2}^{2}\frac{\partial}{\partial z_{1}}$ (see \cite{Ha},\cite{LN}).

\end{ex}

\subsection{The Lie algebra of holomorphic vector fields and the group of automorphisms}
Take the complex Lie subgroup $H\subset G$ corresponding to $\h$.
Consider the action of $G\times H$ on the complex homogeneous space $G/U^{-}$ so that for $(g,h)\in G\times H$ and $[x]\in G/U^{-}$, the action is given by $(g,h)\cdot [x]=[g^{-1}xh]$.
Take the subgroup 
\[\exp\left(\bar{\fl^{\epsilon}}\right)=\left\{\left(\exp\left(\sum_{t=1}^{l} t_{i}X_{i}\right), \exp\left(\sum_{t=1}^{l} t_{i}\bar{A}_{i}\right)\right)\in G\times H\relmiddle| (t_{1},\dots ,t_{l})\in \C^{l}\right\}
\]
in $G\times H$.
We assume that  every $\exp\left(\bar{\fl^{\epsilon}}\right)$-orbit intersects with $K$ at only one point in $G/U^{-}$.
Then, the quotient $(G/U^{-})/ \exp\left(\bar{\fl^{\epsilon}}\right)$ is biholomorphic to the complex manifold $(K, J_{\fl+\epsilon})$.
Denote by $G^{\langle X_{1},\dots ,X_{l}\rangle}$ the subgroup in $G$ consisting of $g\in G$ such that the adjoint operator of $g$ fixes $ X_{1},\dots ,X_{l}$.
The Lie algebra of this Lie group is $\g^{\langle X_{1},\dots ,X_{l}\rangle}$.
Since $G^{\langle X_{1},\dots ,X_{l}\rangle}\times H$ commutes  with $\exp\left(\bar{\fl^{\epsilon}}\right)$ in $G\times H$,
we have the holomorphic action of  $G^{\langle X_{1},\dots ,X_{l}\rangle}\times H$ on  $(G/U^{-})/ \exp\left(\bar{\fl^{\epsilon}}\right)\cong (K, J_{\fl+\epsilon})$.
Denote by ${\rm Aut}(K, J_{\fl+\epsilon})$ the group of automorphisms of the complex manifold $ (K, J_{\fl+\epsilon})$.
We have the homomorphism $\psi^{\epsilon}:G^{\langle X_{1},\dots ,X_{l}\rangle}\times H\to {\rm Aut}(K, J_{\fl+\epsilon})$.
Since $G$ is semisimple,  we can easily compute the kernel  of this homomorphism as
\[{\rm ker}(\psi^{\epsilon})=\{(az,  bz^{-1}) \mid (a,b)\in \exp\left(\bar{\fl^{\epsilon}}\right), z\in Z(G) \}
\]
and hence its Lie algebra is $\bar{\fl^{\epsilon}}=\langle \bar{A}_{1}+X_{1},\dots \bar{A}_{l}+X_{l}\rangle \subset \g^{\langle X_{1},\dots ,X_{l}\rangle}\oplus \h$.
Thus, we have the embedding of $(\g^{\langle X_{1},\dots ,X_{l}\rangle}\oplus \h)/\bar{\fl^{\epsilon}}$ into $H^{0}(K,T^{1,0}_{J_{\fl+\epsilon}}K)$ as a Lie subalgebra.
By Theorem \ref{HT}, we have:
\begin{thm}
If $\{X_{1},\dots, X_{l}\}\not=\{0,\dots, 0\}$ and $\{X_{1},\dots, X_{l}\}$ is  non-resonant  for every $\alpha\in \Delta^{+}$ and the trivial representation $0$ and every $\exp\left(\bar{\fl^{\epsilon}}\right)$-orbit intersects with $K$ at only one point in $G/U^{-}$, 
then  we have a Lie algebra isomorphism
\[H^{0}(K,T^{1,0}_{J_{\fl+\epsilon}}K)\cong (\g^{\langle X_{1},\dots ,X_{l}\rangle}\oplus \h)/\bar{\fl^{\epsilon}}.
\]
\end{thm}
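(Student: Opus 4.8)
The plan is to combine the dimension count already established in Theorem \ref{HT} with the explicit Lie-group-theoretic construction of holomorphic automorphisms described just above the statement. First I would record what is already in hand: the embedding of the Lie algebra $(\g^{\langle X_{1},\dots ,X_{l}\rangle}\oplus \h)/\bar{\fl^{\epsilon}}$ into $H^{0}(K,T^{1,0}_{J_{\fl+\epsilon}}K)$ as a Lie subalgebra, coming from the differential of $\psi^{\epsilon}$ together with the computation of $\ker(\psi^{\epsilon})$ and hence of its Lie algebra $\bar{\fl^{\epsilon}}$. This gives an injective Lie algebra homomorphism
\[
(\g^{\langle X_{1},\dots ,X_{l}\rangle}\oplus \h)/\bar{\fl^{\epsilon}}\hookrightarrow H^{0}(K,T^{1,0}_{J_{\fl+\epsilon}}K).
\]
So the whole theorem reduces to showing this inclusion is onto, and for that it suffices to check the two sides have the same dimension as $\C$-vector spaces.

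The second step is the dimension bookkeeping. On the source side, $\dim\h=l+l=2l$ (since $\h=\fl\oplus\bar\fl$ and $\dim\fl=l$), $\dim\bar{\fl^{\epsilon}}=l$, and $\bar{\fl^{\epsilon}}\subset \g^{\langle X_{1},\dots ,X_{l}\rangle}\oplus\h$ sits diagonally, with its $\h$-component $\langle\bar A_{1},\dots,\bar A_{l}\rangle$ of dimension $l$ spanning $\bar\fl$; hence
\[
\dim\left((\g^{\langle X_{1},\dots ,X_{l}\rangle}\oplus \h)/\bar{\fl^{\epsilon}}\right)=\dim\g^{\langle X_{1},\dots ,X_{l}\rangle}+2l-l=\dim\g^{\langle X_{1},\dots ,X_{l}\rangle}+l.
\]
On the target side, Theorem \ref{HT} gives $\dim H^{0}(K,T^{1,0}_{J_{\fl+\epsilon}}K)=l+\dim\g^{\langle X_{1},\dots ,X_{l}\rangle}$ (this is the displayed ``In particular'' clause of that theorem, using $\g(0)\cap\ker$ of all adjoint operators $=\g^{\langle X_{1},\dots ,X_{l}\rangle}$ in the non-resonant situation, and noting that the nilpotent part of $\g(0)$ contributes no degree-zero cohomology). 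The two numbers agree, so the injective Lie algebra map above is an isomorphism of vector spaces, and being a Lie algebra homomorphism it is a Lie algebra isomorphism. This proves the theorem.

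The main obstacle, and the point that deserves the most care, is verifying that the map into $H^{0}(K,T^{1,0}_{J_{\fl+\epsilon}}K)$ is genuinely \emph{injective}, i.e. that the Lie algebra of $\ker(\psi^{\epsilon})$ is exactly $\bar{\fl^{\epsilon}}$ and nothing larger. This is where the hypothesis that every $\exp(\bar{\fl^{\epsilon}})$-orbit meets $K$ in exactly one point is used: it guarantees that $(G/U^{-})/\exp(\bar{\fl^{\epsilon}})$ is biholomorphic to $(K,J_{\fl+\epsilon})$, so that the $G^{\langle X_{1},\dots ,X_{l}\rangle}\times H$-action really does descend to a holomorphic action on $(K,J_{\fl+\epsilon})$ and induces fundamental vector fields in $H^{0}(K,T^{1,0}_{J_{\fl+\epsilon}}K)$. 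The kernel computation then uses that $G$ is semisimple — so that an element of $G^{\langle X_{1},\dots ,X_{l}\rangle}\times H$ acting trivially on $G/U^{-}$ modulo $\exp(\bar{\fl^{\epsilon}})$ must lie in $\exp(\bar{\fl^{\epsilon}})$ up to the finite center $Z(G)$ — and one should check that passing to Lie algebras kills the $Z(G)$ contribution, leaving precisely $\bar{\fl^{\epsilon}}$. Once injectivity is secure, the equality of dimensions supplied by Theorem \ref{HT} closes the argument with no further work; there is no need to identify the image abstractly beforehand.
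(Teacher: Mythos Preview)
Your proof is correct and follows essentially the same approach as the paper: the preceding paragraph establishes the injective Lie algebra map $(\g^{\langle X_{1},\dots ,X_{l}\rangle}\oplus \h)/\bar{\fl^{\epsilon}}\hookrightarrow H^{0}(K,T^{1,0}_{J_{\fl+\epsilon}}K)$, and the paper then simply invokes Theorem \ref{HT} to conclude. You have merely made the dimension count explicit (and carefully justified injectivity), which the paper leaves to the reader.
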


\begin{cor}
If $\{X_{1},\dots, X_{l}\}\not=\{0,\dots, 0\}$ and $\{X_{1},\dots, X_{l}\}$ is  non-resonant  for every $\alpha\in \Delta^{+}$ and the trivial representation $0$ and every $\exp\left(\bar{\fl^{\epsilon}}\right)$-orbit intersects with $K$ at only one point in $G/U^{-}$, then
the image  ${\rm im} (\psi^{\epsilon})$ of the   homomorphism $\psi^{\epsilon}:G^{\langle X_{1},\dots ,X_{l}\rangle}\times H\to {\rm Aut}(K, J_{\fl+\epsilon})$ contains the identity component of  the group ${\rm Aut}(K, J_{\fl+\epsilon})$ of automorphisms of the complex manifold $ (K, J_{\fl+\epsilon})$.
\end{cor}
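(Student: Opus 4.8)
The plan is to obtain this corollary as a formal consequence of the preceding theorem, which already does all the substantive work. First recall that, since $K$ is compact, $(K,J_{\fl+\epsilon})$ is a \emph{compact} complex manifold; hence by the classical theorem of Bochner and Montgomery (see also \cite{Ak}) the group $\mathrm{Aut}(K,J_{\fl+\epsilon})$ of biholomorphic automorphisms is a finite-dimensional complex Lie group acting holomorphically on $(K,J_{\fl+\epsilon})$, and its Lie algebra is canonically identified with the space $H^{0}(K,T^{1,0}_{J_{\fl+\epsilon}}K)$ of holomorphic vector fields, a one-parameter subgroup corresponding to the holomorphic vector field that generates it.

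Next I would identify the differential $d\psi^{\epsilon}$ of $\psi^{\epsilon}\colon G^{\langle X_{1},\dots ,X_{l}\rangle}\times H\to \mathrm{Aut}(K,J_{\fl+\epsilon})$ at the identity. By the very construction of $\psi^{\epsilon}$ from the holomorphic action of $G^{\langle X_{1},\dots ,X_{l}\rangle}\times H$ on $(G/U^{-})/\exp(\bar{\fl^{\epsilon}})\cong (K,J_{\fl+\epsilon})$, the map $d\psi^{\epsilon}\colon \g^{\langle X_{1},\dots ,X_{l}\rangle}\oplus\h\to H^{0}(K,T^{1,0}_{J_{\fl+\epsilon}}K)$ sends each Lie algebra element to the corresponding fundamental (holomorphic) vector field. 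As computed just before the theorem, $\ker d\psi^{\epsilon}=\bar{\fl^{\epsilon}}$, so $d\psi^{\epsilon}$ factors through the injection $(\g^{\langle X_{1},\dots ,X_{l}\rangle}\oplus\h)/\bar{\fl^{\epsilon}}\hookrightarrow H^{0}(K,T^{1,0}_{J_{\fl+\epsilon}}K)$, and by the preceding theorem this injection is an isomorphism. Thus $d\psi^{\epsilon}$ is surjective onto the Lie algebra of $\mathrm{Aut}(K,J_{\fl+\epsilon})$.

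Finally, a homomorphism of finite-dimensional Lie groups whose differential at the identity is surjective is a submersion near the identity; hence its image contains an open neighborhood of the identity element of the target, and, being a subgroup, it then contains the whole identity component $\mathrm{Aut}(K,J_{\fl+\epsilon})^{0}$. This proves the corollary. I do not anticipate a genuine obstacle here: everything of substance is already in Theorem~\ref{HT} and in the preceding theorem, and this statement is just the translation of ``$d\psi^{\epsilon}$ is a Lie algebra isomorphism onto the holomorphic vector fields'' into a statement about the groups. The only points that deserve a word of care are the appeal to Bochner--Montgomery, so as to know that $\mathrm{Aut}(K,J_{\fl+\epsilon})$ is a finite-dimensional complex Lie group with Lie algebra $H^{0}(K,T^{1,0}_{J_{\fl+\epsilon}}K)$, and the remark that the isomorphism of the preceding theorem really is the one induced by $d\psi^{\epsilon}$ rather than an abstract coincidence of Lie algebras --- but the latter is immediate from the fact that the embedding $(\g^{\langle X_{1},\dots ,X_{l}\rangle}\oplus\h)/\bar{\fl^{\epsilon}}\hookrightarrow H^{0}(K,T^{1,0}_{J_{\fl+\epsilon}}K)$ was constructed precisely via fundamental vector fields of the $G^{\langle X_{1},\dots ,X_{l}\rangle}\times H$-action.
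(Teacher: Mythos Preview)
Your proposal is correct and follows exactly the approach the paper intends: the paper states this corollary without proof, as an immediate consequence of the preceding theorem together with the standard fact (Bochner--Montgomery) that $\mathrm{Aut}(K,J_{\fl+\epsilon})$ is a complex Lie group with Lie algebra $H^{0}(K,T^{1,0}_{J_{\fl+\epsilon}}K)$. Your write-up simply makes explicit the routine passage from the Lie algebra isomorphism to the group-level statement via surjectivity of $d\psi^{\epsilon}$.
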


\begin{ex}
We consider Example \ref{CE}.
Take $A_{1}=-bT_{2}+i(T_{1}+aT_{2})$  and small $X_{1}\in \g^{R}_{hol}$.
As a global distribution on $G/U^{-}\cong (\C^{2}-\{0\})\times (\C^{2}-\{0\})$, we have
\[\bar{\fl^{\epsilon}}=\left\langle z_{1}\frac{\partial}{\partial z_{1}}+z_{2}\frac{\partial}{\partial z_{2}}+c(w_{1}\frac{\partial}{\partial w_{1}}+w_{2}\frac{\partial}{\partial w_{2}})+\sum_{i,j\le 2}\left(a_{ij} z_{i}\frac{\partial}{\partial z_{j}}+b_{ij} w_{i}\frac{\partial}{\partial w_{j}}\right)\right\rangle\] 
with $c=a+bi$ for complex numbers  $a_{ij}, b_{ij}$ with $a_{11}+a_{22}=0$, $b_{11}+b_{22}=0$.
By \cite[Corollary 2]{LN}, for sufficiently small $X_{1}$, 
$\exp\left(\bar{\fl^{\epsilon}}\right)$-orbit intersects with $K=S^{3}\times S^{3}$ at only one point.
Thus we actually have a Lie algebra isomorphism
\[H^{0}(K,T^{1,0}_{J_{\fl+\epsilon}}K)\cong \C \oplus ({\frak sl}_{2}(\C))^{\langle X_{1}\rangle} \oplus ({\frak sl}_{2}(\C))^{\langle X_{1}\rangle}.
 \]

\end{ex}


\begin{thebibliography}{40}
\bibitem{Aka} K.  Akao, 
On deformations of the Calabi-Eckmann manifolds. Proc. Japan Acad. 51 (1975), no. 6, 365--368.
\bibitem{Ak}
D. N. Akhiezer,
Lie group actions in complex analysis. 
Aspects of Mathematics, {\bf E27}. Friedr. Vieweg  Sohn, Braunschweig, 1995.
\bibitem{AK} D. Angella, H. Kasuya
Cohomologies of deformations of solvmanifolds and closedness of some properties. North-West. Eur. J. Math. {\bf 3} (2017), 75--105.
\bibitem{B}
R. Bott, Homogeneous vector bundles, Ann. of Math. (2) {\bf 66} (1957), 203--248.
\bibitem{CE}
E. Calabi, B. Eckmann,  A class of compact complex manifolds which are not algebraic. Ann, of Math., {\bf 58}, 494--500 (1953).
\bibitem{FH}
W. Fulton, J. Harris, Representation Theory, GTM {\bf 129}, Springer-Verlag, 1991.
\bibitem{Ghy}
E. Ghys,  D\'eformations des structures complexes sur les espaces homog\'enes de SL(2, C). J. Reine Angew. Math. {\bf468} (1995), 113--138.

\bibitem{Ha}
A. Haefliger, 
Deformations of transversely holomorphic flows on spheres and deformations of Hopf manifolds. 
Compositio Math. {\bf 55} (1985), no. 2, 241--251. 
\bibitem{Has} K. 
Hasegawa, Small deformations and non-left-invariant complex structures on six-dimensional compact solvmanifolds. Differ. Geom. Appl. {\bf 28}(2), 220--227 (2010)
\bibitem{He}
S. Helgason,  Differential geometry, Lie groups, and symmetric spaces. 
 Graduate Studies in Mathematics, {\bf 34}. American Mathematical Society, Providence, RI, 2001.
 
 \bibitem{Hir}
F. Hirzebruch,  Topological methods in algebraic geometry.    Classics in Mathematics. Springer-Verlag, Berlin, 1995.
 
 \bibitem{Huy}
 D. Huybrechts, 
Complex geometry. 
An introduction. Universitext. Springer-Verlag, Berlin, 2005. 

\bibitem{Kasd} H. Kasuya, 
de Rham and Dolbeault cohomology of solvmanifolds with local systems. Math. Res. Lett. {\bf 21} (2014), no. 4, 781--805.

\bibitem{Kos} B. Kostant, Lie algebra cohomology and the generalized Borel-Weil theorem. Ann. of Math. (2) {\bf 74} (1961) 329--387. 

\bibitem{LN} J.J. Loeb, M. Nicolau 
Holomorphic flows and complex structures on products of odd-dimensional spheres. 
Math. Ann. {\bf306} (1996), no. 4, 781--817. 

\bibitem{LMN}  J. J. Loeb, M. Manjarin, M. Nicolau,
Complex and CR structures on compact Lie groups associated to abelian actions. 
Ann. Global Anal. Geom. {\bf 32} (2007), no. 4, 361--378.
\bibitem{Nak}I. 
Nakamura,  Complex parallelisable manifolds and their small deformations. J. Differ. Geom. {\bf10}(1), 85--112 (1975)
\bibitem{Rol}
S.  Rollenske,  The Kuranishi space of complex parallelisable nilmanifolds. J. Eur. Math. Soc. (JEMS) {\bf13} (2011), no. 3, 513--531.
\bibitem{Sam}  H. Samelson, A class of compact-analytic manifolds. Portugaliae  Math. {\bf12}  (1953) 129--132.
\bibitem{Wangp}H. 
Wang,  Complex parallelisable manifolds. Proc. Amer. Math. Soc. {\bf 5} (1954), 771--776.
\bibitem{Wang} H. Wang, Closed manifolds with homogeneous complex structure. Amer. J. of Math. {\bf 76} (1954) 1--32
\bibitem{Win}
J. Winkelmann,  Complex-analytic geometry of complex parallelizable manifolds. Habilitationsschrift. Heft 13 der Schriftenreihe des Graduiertenkollegs Geometrie und Mathematische Physik an der Ruhr-Universit\"at Bochum (1995).
 \end{thebibliography}
\end{document}